\newtheorem{thm}{Theorem}%[section]
\newtheorem{lem}[thm]{Lemma}
\newtheorem{cor}[thm]{Corollary}
\newcommand{\prob}[1]{\mathrm{Pr}\left ( #1 \right )}
\newcommand{\E}[1]{\mathbb{E} \left [ #1 \right ]}
\newcommand{\nat}{{\mathbb N}}
\newcommand{\floor}[1]{\left\lfloor#1\right\rfloor}
\newcommand{\ceil}[1]{\left\lceil#1\right\rceil}
\newcommand{\cp}{\mathbin{\square}}
\DeclareMathOperator{\capt}{capt}
\DeclareMathOperator{\dist}{dist}
\DeclareMathOperator{\rad}{rad}
\DeclareMathOperator{\diam}{diam}
\DeclareMathOperator{\Bin}{Bin}
\title{The game of Overprescribed Cops and Robbers played on graphs}
\author{Anthony Bonato}
\address{Department of Mathematics\\
Ryerson University\\
Toronto, ON\\
Canada} \email{abonato@ryerson.ca}
\author{Xavier P\'erez-Gim\'enez}
\address{Department of Mathematics\\
University of Nebraska-Lincoln\\
Lincoln, NE\\
U.S.A.} \email{xperez@unl.edu}
\author{Pawe{\l} Pra{\l}at}
\address{Department of Mathematics\\
Ryerson University\\
Toronto, ON\\
Canada
\and
The Fields Institute for Research in Mathematical Sciences\\
Toronto, ON\\
Canada} \email{pralat@ryerson.ca}
\author{Benjamin Reiniger}
\address{Department of Applied Mathematics\\
Illinois Institute of Technology\\
Chicago, IL \\
U.S.A.} \email{breiniger@iit.edu}
\thanks{The first and third authors are supported by grants from NSERC}
\begin{document}

\begin{abstract}
We consider the effect on the length of the game of Cops and Robbers when more cops are added to the game play. In Overprescribed Cops and Robbers, as more cops are added, the capture time (the
minimum length of the game assuming optimal play) monotonically decreases. We give the full range of capture times for any number of cops on trees, and classify the capture time for an asymptotic
number of cops on grids, hypercubes, and binomial random graphs. The capture time of planar graphs with a number of cops at and far above the cop number is considered.
\end{abstract}

\maketitle

\section{Introduction}

The game of Cops and Robbers, first introduced in~\cite{AF,nw,q}, has attracted considerable recent interest among graph theorists. The game is played on a reflexive graph; that is, each vertex has
at least one loop. Multiple edges are allowed, but make no difference to the play of the game, so we always assume there is exactly one edge joining adjacent vertices. There are two players,
consisting of a set of \emph{cops} and a single \emph{robber}. The game is played over a sequence of discrete time-steps or turns, with the cops going first on turn 0 and then playing on alternate
time-steps. We refer to the set of cops as $C$ and the robber as $R$. When a player is ready to move in a round they must move to a neighbouring vertex. Because of the loops, players can pass, or
remain on their own vertices. Observe that any subset of $C$ may move in a given round. The cops win if after some finite number of rounds, one of them can occupy the same vertex as the robber (in a
reflexive graph, this is equivalent to the cop landing on the robber). This is called a \emph{capture}. The robber wins if he can evade capture indefinitely. A \emph{winning strategy for the cops} is
a set of rules that if followed, result in a win for the cops. A \emph{winning strategy for the robber} is defined analogously.

The \emph{cop number} of a graph, first introduced in \cite{AF}, is the minimum number of cops needed to have a winning strategy. The cop number is often a challenging graph parameter to analyze, and
establishing upper bounds for this parameter is the focus of Meyniel's conjecture: the cop number of a connected $n$-vertex graph is $O(\sqrt{n}).$

The \emph{length} of a game is the number of rounds it takes (not including the initial or $0$th round) to capture the robber. We say that a play of the game with $c(G)$ cops is \emph{optimal} if its
length is the minimum over all possible strategies for the cops, assuming the robber is trying to evade capture for as long as possible (here $c(G)$ denotes the cop number of $G$). If $k$ cops play
on a graph with $k \ge c(G)$, we denote this invariant $\capt_k(G)$, which we call the $k$-\emph{capture time} of $G$. In the case $k=c(G)$, we just write $\capt(G)$ and refer to this as the
\emph{capture time} of $G$. Note that $\capt_k(G)$ is trivially $0$ if $k\ge n$, where $n$ is the order of $G$; and $\capt_k(G)=1$ if $\gamma (G) \le k < n$, where $\gamma (G)$ denotes the domination
number of $G$. Hence, the analysis of this invariant can be restricted to the range $c(G) \le k \le \gamma(G)$. (We can assume that $\gamma (G)<n$ and thus, $\capt_{\gamma(G)}(G)=1$, by excluding the
degenerate case in which $G$ is a co-clique.) Observe that $\capt_k(G)$ is monotonically decreasing with $k$. We refer to this effect as \emph{temporal speed-up}.

The capture time was introduced in \cite{BGHK}. From \cite{BGHK,gav1} it was shown that if $G$ is cop-win (that is, has cop number $1$) of order $n\ge7,$ then $\capt(G) \leq n-4,$ and there are
planar cop-win graphs that prove that the bound of $n-4$ is optimal. Mehrabian~\cite{meh} investigated the capture time of Cartesian grids, and proved that if $G$ is the Cartesian product of two
trees, then $\capt(G)=\lfloor \diam(G)/2 \rfloor .$  In particular, the $2$-capture time of an $m \times n$ Cartesian grid is $\lfloor \frac{m+n}{2} \rfloor -1$.  The capture time of hypercubes was
studied in \cite{BGKP}, where the authors used the probabilistic method to prove that $\mathrm{capt}(Q_n) = \Theta(n \ln n)$. See also \cite{gen}.

In the present work, we consider \emph{Overprescribed Cops and Robbers} games, where the number of cops is \emph{strictly greater} than the cop number. We study temporal speed-up for various graph
classes, such as trees (see Section~\ref{sec:prelims}), grids and hypercubes (see Section~\ref{sec:gridsH}), and planar graphs (see Section~\ref{sec:planar}). For trees, we derive the precise value
of $\capt_k(G)$ for all $k$ using metric $k$-centers. We give the asymptotic order of all values of temporal speed-up for grids and hypercubes (with one small exception in the range of $k$ for
hypercubes). We analyze temporal speed-up for planar graphs playing with $\Omega(\sqrt{n})$ cops, and bounds on the $k$-capture time on planar graphs playing with $k=3$ (which is the upper bound for
the cop number of planar graphs). The paper finishes with a discussion of temporal speed-up in binomial random graphs.

We consider finite undirected, reflexive graphs. For additional background on Cops and Robbers and Meyniel's conjecture, see the book~\cite{BN}. For additional background on graph theory, see~\cite{west}.

\section{Trees and retracts}\label{sec:prelims}

The classification of the $k$-capture times for all $k$ for the class of trees is relatively straightforward. Hence, we begin with this class as a warm up. Along the way, we prove an elementary but
useful theorem relating $k$-capture time to retracts.

For an integer $k\geq1$ and a graph $G$, let $\rad_k(G)$ denote the \emph{$k$-center radius}, defined as
\[\min_{\substack{S\subseteq V(G)\\|S|\leq k}} \max_{v\in V(G)} \dist(v,S).\]
A set $S$ achieving the minimum is called a \emph{metric $k$-center} of $G$. Note that $\rad_1(G)$ is just the radius of
$G$; we will drop the subscript 1 in this case. Computing the $k$-center radius is \textbf{NP}-hard for general graphs; see \cite{vaz}.

A \emph{retract} of a graph $G$ is an induced subgraph $H$ for which there exists a graph homomorphism from $G$ to $H$ whose restriction to $H$ is the identity. Retracts play an important role in the
game of Cops and Robbers as noted in \cite{AF}, who proved that for a retract $H$ of $G$, $c(H)\leq c(G)$.  We note the following observation, likely part of folklore.

\begin{lem}\label{thm:kradiusretract}
For a retract $H$ of a graph $G$, $\rad_k(H)\leq\rad_k(G)$.
\end{lem}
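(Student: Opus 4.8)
The plan is to take a metric $k$-center $S$ of $G$ and show that its image under the retraction is a witness of comparable quality in $H$. Let $f\colon G\to H$ be a graph homomorphism that restricts to the identity on $H$, and let $S\subseteq V(G)$ with $|S|\leq k$ be a set realizing $\rad_k(G)$, so that every $v\in V(G)$ satisfies $\dist_G(v,S)\leq\rad_k(G)$. I would then propose $f(S)\subseteq V(H)$ as a candidate $k$-center for $H$. Note immediately that $|f(S)|\leq|S|\leq k$, so $f(S)$ is an admissible set in the minimization defining $\rad_k(H)$.

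The single fact that makes everything work is that a graph homomorphism is distance-nonincreasing: if $u,v\in V(G)$ and $P$ is a shortest $u$--$v$ path in $G$, then $f$ sends $P$ to a walk of the same length in $H$ joining $f(u)$ and $f(v)$, whence $\dist_H\bigl(f(u),f(v)\bigr)\leq\dist_G(u,v)$. (Reflexive loops guarantee that edges of $G$ are never collapsed to non-edges in a way that breaks this, since adjacent or equal vertices map to adjacent or equal vertices.) I would state and use this inequality as the one structural input.

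With this in hand, the verification is short. Fix any $w\in V(H)$. Since $H\subseteq G$ and $S$ is a $k$-center of $G$, there is some $s\in S$ with $\dist_G(w,s)\leq\rad_k(G)$. Applying the homomorphism bound gives $\dist_H\bigl(f(w),f(s)\bigr)\leq\dist_G(w,s)\leq\rad_k(G)$. Because $f$ is the identity on $H$ and $w\in V(H)$, we have $f(w)=w$, so $\dist_H\bigl(w,f(s)\bigr)\leq\rad_k(G)$ with $f(s)\in f(S)$. Thus every vertex of $H$ lies within distance $\rad_k(G)$ of the set $f(S)$, which has at most $k$ elements, and therefore $\rad_k(H)\leq\rad_k(G)$.

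The only point requiring care—and the step I would flag as the main obstacle—is keeping the two metrics straight: the $k$-center property of $S$ is a statement about $\dist_G$, whereas the conclusion must be phrased in $\dist_H$, and these differ in general for an induced subgraph. The homomorphism bound is precisely the bridge between them, and the identity-on-$H$ property of the retract is what lets me replace $f(w)$ by $w$ so that the $\dist_H$ estimate is about genuine vertices of $H$. Everything else is bookkeeping.
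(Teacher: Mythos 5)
Your proof is correct and follows essentially the same argument as the paper: both take the image $f(S)$ of a metric $k$-center of $G$ under the retraction and use the fact that a homomorphism maps a walk of length at most $r$ in $G$ to a walk of length at most $r$ in $H$, together with $f$ being the identity on $H$, to conclude $\rad_k(H)\leq\rad_k(G)$. Your explicit attention to the distinction between $\dist_G$ and $\dist_H$ is a nice touch of care, but the substance is identical to the paper's proof.
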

\begin{proof}
Let $S$ be a metric $k$-center of $G$, let $r=\rad_k(G)$, and let $f$ be a homomorphism witnessing that $H$ is a retract of $G$. Consider the set $f(S)$ in $H$.  We claim that every vertex of $H$ is
within distance $r$ of some vertex of $f(S)$, from which it follows that $\rad_k(H)\leq r$.

Let $v$ be a vertex of $H$ and let $s\in S$ be such that $\dist_G(v,s)\leq r$.  Consider a $(v,s)$-walk $W$ of length at most $r$ in $G$. Then $f(W)$ is a $(v,f(s))$-walk of length at most $r$ in
$H$.
\end{proof}
Note that cop number and $k$-center radius are not monotonic under subgraphs or induced subgraphs: for instance, adding a universal vertex drops both parameters to 1.

Metric $k$-centers give us an elementary method to lower bound the $k$-capture time.

\begin{lem}\label{thm:kcenterlb}
For any graph $G$, $\capt_k(G)\geq \rad_k(G)$.
\end{lem}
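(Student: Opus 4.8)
The plan is to produce an explicit robber strategy---having the robber simply stand still---that survives for $\rad_k(G)$ rounds against every strategy of the $k$ cops. Since $\capt_k(G)$ is the minimum over cop strategies of the length of play when the robber tries to delay capture as long as possible, it suffices to exhibit one robber response that forces the game to last at least $\rad_k(G)$ rounds.

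First I would record the cops' initial configuration. On turn $0$ the $k$ cops occupy some set $S\subseteq V(G)$ with $|S|\le k$ (several cops may share a vertex, which only makes $S$ smaller). Because $\rad_k(G)$ is the minimum of $\max_{v}\dist(v,S')$ over all sets $S'$ of size at most $k$, this particular set $S$ satisfies $\max_{v\in V(G)}\dist(v,S)\ge\rad_k(G)$. Hence there is a vertex $v_0$ with $\dist(v_0,S)\ge\rad_k(G)$, and the robber places himself at $v_0$ and never moves, using a loop to pass in every subsequent round (which is legal since $G$ is reflexive).

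The core estimate is then a speed bound on the cops. In each round a cop moves to a neighbouring vertex, so after $t$ rounds any given cop has made at most $t$ moves and therefore lies within distance $t$ of its starting vertex in $S$; in particular it is at distance at least $\dist(v_0,S)-t\ge\rad_k(G)-t$ from $v_0$. A capture at round $t$ would require some cop to occupy $v_0$, i.e.\ $\dist(v_0,S)\le t$, which is impossible while $t<\rad_k(G)$. Thus no capture can occur before round $\rad_k(G)$, giving $\capt_k(G)\ge\rad_k(G)$.

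There is no substantial obstacle here: the argument uses nothing about optimal cop play, only the fact that the cops' occupied ball around $S$ can grow by at most one step per round. The single point that requires care is the turn-order bookkeeping---namely that the cops commit to $S$ on turn $0$, the robber then chooses $v_0$ \emph{in response}, and each later round grants each cop exactly one step---together with the (trivial) verification that the stationary robber always has a legal pass move.
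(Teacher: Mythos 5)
Your proposal is correct and follows exactly the paper's argument: the cops commit to a set $S$ of at most $k$ starting vertices, the robber responds by standing still on a vertex at distance at least $\rad_k(G)$ from $S$, and capture cannot occur before round $\rad_k(G)$. The only difference is that you spell out the cops' speed bound (one edge per round) explicitly, which the paper leaves implicit.
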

\begin{proof}
We need to provide a strategy for the robber.  After the cops have chosen their starting vertices, we place the robber on a vertex of maximum distance from any cop and just keep her there throughout the game.
\end{proof}
The following corollary is immediate.

\begin{cor}
For any $G$ and any $k$, $\capt_k(G)\geq \frac{\diam(G)-k+1}{2k}$.
\end{cor}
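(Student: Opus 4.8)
The plan is to combine Lemma~\ref{thm:kcenterlb}, which gives $\capt_k(G)\geq\rad_k(G)$, with a purely metric lower bound on the $k$-center radius in terms of the diameter. Thus the entire task reduces to proving
\[
\rad_k(G)\geq\frac{\diam(G)-k+1}{2k},
\]
after which the corollary follows by transitivity of the two inequalities. I expect this to be genuinely ``immediate'' in the sense the authors intend, with the only real content being a short pigeonhole count along a diametral path.

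To prove the metric bound, I would fix a metric $k$-center $S=\{s_1,\dots,s_k\}$ of $G$ and set $r=\rad_k(G)$, so that every vertex of $G$ lies within distance $r$ of $S$. Next I would take two vertices $u,w$ realizing the diameter $D=\diam(G)$, and fix a shortest $(u,w)$-path $v_0=u,v_1,\dots,v_D=w$, which has $D+1$ vertices. Each $v_i$ is covered by at least one center of $S$. The key observation is that a single center $s$ cannot cover too many of these path vertices at once: if $s$ is within distance $r$ of both $v_i$ and $v_j$, then the triangle inequality gives $\dist(v_i,v_j)\leq 2r$, while the fact that we are on a geodesic forces $\dist(v_i,v_j)=|i-j|$, so $|i-j|\leq 2r$. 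Hence the indices covered by any fixed center lie in an interval of integer length at most $2r$, i.e.\ a single center covers at most $2r+1$ of the vertices $v_0,\dots,v_D$.

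Counting over all $k$ centers then yields $D+1\leq k(2r+1)=2kr+k$, and solving for $r$ gives $r\geq\frac{D-k+1}{2k}$, as desired. The only step requiring any care is the geodesic identity $\dist(v_i,v_j)=|i-j|$ along the chosen shortest path, which is what makes the covering intervals count correctly; everything else is the triangle inequality and pigeonhole. I would then close by appealing to Lemma~\ref{thm:kcenterlb} to transfer the bound from $\rad_k(G)$ to $\capt_k(G)$, completing the proof.
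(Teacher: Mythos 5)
Your proof is correct, and it takes a somewhat different (more elementary) route than the paper's, though the core count is the same. The paper factors the argument through retraction: it notes that a diametral shortest path $P_{d+1}$ is a retract of $G$ (projecting each vertex of $G$ to the path vertex at the same distance from one endpoint), invokes Lemma~\ref{thm:kradiusretract} to get $\rad_k(G)\geq\rad_k(P_{d+1})$, and then does the ball-counting entirely inside the path, where a ball of radius $r$ has at most $2r+1$ vertices. You instead keep the metric $k$-center in $G$ itself and show directly, via the triangle inequality together with the geodesic identity $\dist(v_i,v_j)=|i-j|$, that any single center covers at most $2r+1$ vertices of the diametral path; from there the pigeonhole count $D+1\leq k(2r+1)$ and the final appeal to Lemma~\ref{thm:kcenterlb} are identical in both proofs. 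What your version buys is self-containedness: you never need the retract lemma, nor the (small but nontrivial) verification that the distance-projection map is a homomorphism fixing the path. What the paper's version buys is reuse of machinery it has already set up: the corollary doubles as an illustration of Lemma~\ref{thm:kradiusretract}, and reducing a metric question on $G$ to the same question on a simple retract is a template the paper exploits elsewhere (retracts also drive the upper bounds in Theorem~\ref{thm:retractptn}). Either argument is a complete and valid proof of the stated inequality.
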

\begin{proof}
Let $d=\diam(G)$.  Then $G$ has a copy of $P_{d+1}$ as a retract; see \cite{AF,BN}. (For a direct argument, let $P$ be a shortest path of length $d$ with end-vertices $x$ and $y$. Map each vertex of
$G$ to the vertex of $P$ which is the same distance to $x.$) By Lemma~\ref{thm:kradiusretract} we have that $\rad_k(G)\geq \rad_k(P_{d+1})$. The balls of radius $r$ in $P_{d+1}$ have size at most
$2r+1$, so if a set of $k$ balls are to cover the vertex set, we must have that $k(2r+1)\geq d+1$. Hence, $\rad_k(P_{d+1})\geq \frac{d+1}{2k}-\frac{1}{2}$.
\end{proof}

The following theorem establishes the capture time of trees.

\begin{thm}\label{thm:capt1tree}
For any tree $T$, $\capt(T)=\rad(T)$.
\end{thm}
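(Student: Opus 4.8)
The plan is to prove the two inequalities $\capt(T)\ge\rad(T)$ and $\capt(T)\le\rad(T)$ separately. Since every tree is cop-win (so $c(T)=1$), the game is played with a single cop and $\capt(T)=\capt_1(T)$. The lower bound is then immediate from Lemma~\ref{thm:kcenterlb} applied with $k=1$: $\capt_1(T)\ge\rad_1(T)=\rad(T)$. Concretely, this corresponds to the robber sitting forever at a vertex of distance $\rad(T)$ from the cop's chosen starting vertex.

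For the upper bound I would exhibit a single-cop strategy that captures the robber within $\rad(T)$ rounds. Root $T$ at a center vertex $c$ (a vertex of eccentricity $\rad(T)$) and start the cop there. The cop's rule is simply to move, each round, to the child of its current vertex that lies on the unique path toward the robber. The essential feature of trees is that any two vertices are joined by a unique path, so ``toward the robber'' is well defined and, crucially, the robber can never slip past the cop.

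The heart of the argument is an invariant maintained across rounds: after the cop's $t$-th move it occupies a vertex $u_t$ at depth $t$, and, unless already captured, the robber lies in the subtree rooted at $u_t$ and strictly below $u_t$. I would verify this by induction on $t$. The cop's descent sends it to the child $u_t$ on the robber's branch, so the robber is confined to the subtree rooted at $u_t$; to escape this subtree the robber would have to pass through $u_t$ itself, which is exactly the cop's vertex, and that is a capture. Hence to avoid capture the robber must remain strictly below the cop, i.e.\ at depth at least $t+1$. Because $c$ has eccentricity $\rad(T)$, every vertex has depth at most $\rad(T)$, so the invariant can survive only while $t+1\le\rad(T)$; on the cop's $\rad(T)$-th move the robber, pinned at depth $\rad(T)$ inside the subtree rooted at $u_{\rad(T)}$, must coincide with $u_{\rad(T)}$ and is captured.

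The main point requiring care is making the confinement argument precise across the alternation of turns (cop moves, then robber moves) and handling the terminal bookkeeping so that the count is exactly $\rad(T)$ rather than $\rad(T)+1$. Rooting at a center, so that the maximum depth equals the radius, is precisely what makes the upper bound match the lower bound. Combining the two inequalities yields $\capt(T)=\rad(T)$.
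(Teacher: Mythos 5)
Your proof is correct and follows essentially the same route as the paper: the lower bound via Lemma~\ref{thm:kcenterlb}, and for the upper bound a cop starting at a central vertex who repeatedly moves along the unique path toward the robber, with the invariant that the robber stays confined to the subtree below the cop. Your induction on depth just makes explicit the bookkeeping that the paper leaves implicit.
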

\begin{proof}
The lower bound follows from Lemma~\ref{thm:kcenterlb}.
For the upper bound on $\capt(T)$, we give a strategy for the cop.  He initially places himself on a central vertex of $T$, and at each step moves along the unique path between himself and the
robber.  Rooting the tree at his starting vertex, this implies that the robber is always in the subtree rooted at the cop's current position (if not already caught), and so she is caught in at most
$\rad(T)$ steps.
\end{proof}

\smallskip

The next theorem is useful to bound the capture time of a graph when there are many more cops than are needed to capture the robber.  We will see in the later sections that it gives the correct
capture time up to a constant factor for grids and, in some cases, hypercubes.

\begin{thm}\label{thm:retractptn}
Suppose that $V(G)=V_1\cup\dotsb\cup V_t$, where $G[V_i]$ is a retract of $G$ for every $i$ and $k=\sum_{i\in[t]} k_i$.  Then $\capt_k(G)\leq\max_{i\in[t]} \capt_{k_i}(G[V_i])$. Note that if
$k_i<c(G[V_i])$, then we say that $\capt_{k_i}(G[V_i])=\infty$.
\end{thm}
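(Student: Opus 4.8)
The plan is to use a \emph{shadow strategy}: partition the $k$ cops into groups of sizes $k_1,\dots,k_t$, and have the $i$-th group play entirely within the retract $G[V_i]$, chasing not the robber herself but her image under the associated retraction. Concretely, for each $i$ let $f_i\colon G\to G[V_i]$ be a homomorphism witnessing that $G[V_i]$ is a retract, so $f_i$ restricts to the identity on $V_i$. If some $k_i<c(G[V_i])$ then the right-hand side is $\infty$ and there is nothing to prove, so I would assume every $k_i\ge c(G[V_i])$ and write $T_i=\capt_{k_i}(G[V_i])$ and $T=\max_i T_i$. Since $G[V_i]$ is an induced subgraph, every move group $i$ makes inside $V_i$ is a legal move in $G$, so the combined strategy is a legal cop strategy.

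The first step is to observe that the robber's trajectory $r_0,r_1,\dots$ in $G$ projects to a legal robber trajectory $f_i(r_0),f_i(r_1),\dots$ in $G[V_i]$: because $f_i$ is a graph homomorphism, consecutive images are equal or adjacent, which in a reflexive graph is exactly a legal move for a robber confined to $G[V_i]$. Call $f_i(R)$ the \emph{$i$-shadow} of the robber. I would then have group $i$ run an optimal $k_i$-cop strategy for $G[V_i]$, treating the $i$-shadow as its robber. Because the shadow is a genuine robber play in $G[V_i]$, group $i$ captures it (places a cop on $f_i(R)$) within $T_i$ rounds.

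The second step handles what happens after the shadow is caught: once a cop in group $i$ occupies $f_i(R)$, he maintains co-location forever by copying the shadow's moves, since when the real robber steps from $r_t$ to $r_{t+1}$ the shadow steps between adjacent or equal vertices, which the cop can follow. Hence from round $T_i$ onward group $i$ keeps a cop on $f_i(R)$, and in particular by round $T$ every group has a cop sitting on its shadow.

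The final and conceptually key step converts a shadow-capture into a genuine capture. At round $T$ the robber occupies some vertex $r_T$, which lies in at least one part $V_j$ of the cover; for that index we have $f_j(r_T)=r_T$ because $f_j$ fixes $V_j$ pointwise, so the cop of group $j$ sitting on $f_j(R)=r_T$ is on the robber herself. This yields $\capt_k(G)\le T$. I expect the main obstacle to be lining this observation up with the turn-by-turn timing of the game, but the crucial point is that the robber must always live inside \emph{some} retract of the cover, and on that retract her own projection is the identity, which is exactly what lets the independent parallel shadow strategies combine into a real capture.
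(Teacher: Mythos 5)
Your proof is correct and takes essentially the same approach as the paper: the paper also assigns a team of $k_i$ cops to each territory $G[V_i]$, has each team capture the robber's image under the retraction onto its territory, and concludes that since the robber always lies in some $V_j$ (where the retraction fixes her), the corresponding team catches her in at most $\max_i \capt_{k_i}(G[V_i])$ rounds. Your write-up merely makes explicit the details the paper leaves implicit (legality of the shadow trajectory, maintaining co-location after a shadow is caught, and the final timing argument), all of which are sound.
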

\begin{proof}
We give a strategy for the cops.  For each $i$, we assign a team of $k_i$ cops to $G[V_i]$, which we refer to as the \emph{territory} of those cops. Each team of cops plays their optimal strategy on
their territory to capture the image of the robber under the retract to $G[V_i]$.  After $\max_i \capt_{k_i}(G[V_i])$ turns, every team of cops has caught their projection of the robber; in
particular, some team of cops has caught the robber.
\end{proof}
The following corollary gives the $k$-capture time for trees for all $k.$

\begin{cor}
For any tree $T$, $\capt_k(T)=\rad_k(T)$.
\end{cor}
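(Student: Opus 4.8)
The lower bound is immediate: Lemma~\ref{thm:kcenterlb} gives $\capt_k(T)\ge\rad_k(T)$ for every graph, so the task reduces to establishing the matching upper bound $\capt_k(T)\le\rad_k(T)$. The plan is to decompose $T$ into subtrees according to a metric $k$-center, assign one cop to each piece, and then invoke the single-cop result Theorem~\ref{thm:capt1tree} together with the territory decomposition of Theorem~\ref{thm:retractptn}.

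Concretely, I would fix a metric $k$-center $S=\{s_1,\dots,s_m\}$ with $m\le k$ and write $r=\rad_k(T)$, so that every vertex lies within distance $r$ of $S$. I would partition $V(T)$ into Voronoi cells $V_1,\dots,V_m$, assigning each vertex $v$ to a center $s_i$ minimizing $\dist(v,s_i)$ and breaking ties in favour of the smallest index $i$; call this assignment $f$. The key structural claim is that each cell $T[V_i]$ is a \emph{subtree} containing $s_i$, and moreover that $\dist_{T[V_i]}(v,s_i)=\dist_T(v,s_i)\le r$ for every $v\in V_i$, whence $\rad(T[V_i])\le r$.

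To prove this claim I would show that for $v\in V_i$ the entire unique $(v,s_i)$-path stays inside $V_i$. For any intermediate vertex $w$ on this path, $\dist(w,s_i)=\dist(v,s_i)-\dist(v,w)$, while the triangle inequality gives $\dist(w,s_j)\ge\dist(v,s_j)-\dist(v,w)\ge\dist(v,s_i)-\dist(v,w)=\dist(w,s_i)$ for every $j$; a short argument using the smallest-index tie-breaking rule then forces $f(w)=s_i$. This simultaneously yields connectedness of each cell and the equality of intrinsic and ambient distances along these paths, which is the main technical point of the argument.

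Finally, since every subtree of a tree is a retract (the nearest-point projection onto a connected subtree is a graph homomorphism fixing it pointwise), each $T[V_i]$ is a retract of $T$, and by Theorem~\ref{thm:capt1tree} we have $\capt(T[V_i])=\rad(T[V_i])\le r$. Applying Theorem~\ref{thm:retractptn} with $t=m$ and $k_i=1$ for all $i$ gives $\capt_m(T)\le\max_i\capt(T[V_i])\le r$, and monotonicity of $\capt_k$ in $k$ (using $m\le k$) yields $\capt_k(T)\le r=\rad_k(T)$. Combined with the lower bound this completes the proof. I expect the connectedness-and-isometry claim for the Voronoi cells to be the only real obstacle; once the tie-breaking is handled carefully, the remainder is a direct application of the two theorems.
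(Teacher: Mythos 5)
Your proof is correct, including the tie-breaking argument for the Voronoi cells, but the place where you do the most work is exactly where you diverge from the paper, and the paper's choice makes that work unnecessary. Both arguments share the same skeleton: the lower bound from Lemma~\ref{thm:kcenterlb}, and an upper bound obtained by decomposing $T$ into subtrees around a metric $k$-center, noting that subtrees of a reflexive tree are retracts, and applying Theorem~\ref{thm:retractptn} with $k_i=1$ together with Theorem~\ref{thm:capt1tree}. The difference is the decomposition. The paper simply takes $V_i=B(v_i,\rad_k(T))$, the balls of radius $\rad_k(T)$ around the center vertices: these cover $V(T)$ by the very definition of $\rad_k$, and each ball is automatically a subtree of radius at most $\rad_k(T)$ centered at $v_i$, so no structural claim is needed. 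The point you may have missed is that the hypothesis of Theorem~\ref{thm:retractptn} is only that $V(G)=V_1\cup\dotsb\cup V_t$, a \emph{cover}, not a partition --- overlaps are harmless, since each team of cops independently chases the robber's projection onto its own territory (the paper makes this explicit in the grid case: ``the subgrids may overlap, but they cover all vertices''). Your insistence on a genuine partition is what forces you to prove the connectedness-and-isometry claim for the Voronoi cells (correctly, as it happens), and also forces the small detour through $m\le k$ and monotonicity, where the paper just takes a center with exactly $k$ vertices. What your partition buys is disjoint territories, which is aesthetically cleaner and would matter in a variant where two teams could not share vertices; for this corollary, though, the overlapping balls give the same bound essentially for free.
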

\begin{proof}
The lower bound follows from Lemma~\ref{thm:kcenterlb}. For the upper bound, let $\{v_1, \dotsc, v_k\}$ be the vertices of metric $k$-center of $T$.  Take $V_i=B(v_i, \rad_k(T))$. By the definition
of $\rad_k(T)$, we have $V(T)=\bigcup_{i\in[k]} V_i$.  With $k_i=1$ for every $i$, Theorem~\ref{thm:retractptn} and Theorem~\ref{thm:capt1tree} imply the result.
\end{proof}

\section{Cartesian grids and hypercubes}\label{sec:gridsH}

Given $d, q \in\nat$, let $G^d_q = \square_{i=1}^d P_q$ be the $d$-dimensional Cartesian grid on $q^d$ vertices. For $q\geq2$, the cop number of $G^d_q$ is $\ceil{(d+1)/2}$~\cite{MM}, with associated
capture time less than $\frac12 qd\ceil{\log_2 d}$ (see Theorem~3 in~\cite{BGKP}). We will first consider the Cartesian grid $G^d_q$ of constant dimension $d$ with $q=n$ for some $n\to\infty$. Later
on, we will shift our attention to the case in which $q=2$ and the dimension $d=n$ for some $n\to\infty$. In that second case, the Cartesian grid $G^n_2$ is also known as the hypercube and we will
denote it by $Q_n$. All asymptotic notations in this section are with respect to $n$, as $n$ grows to infinity.

The following theorem gives the asymptotic order of the $k$-capture time of the $d$-dimensional Cartesian grid $G^d_n$, for constant $d$. Note that the domination number is trivially $\Theta(n^d)$.
\begin{thm}\label{thm:grids}
Fix any constant $d\in\nat$, and let $k=k(n)$ be such that $k\geq c(G_n^d)$.  If $k = O(n^d)$, then $\capt_k(G^d_n)=\Theta\left(n / k^{1/d} \right)$.
%If $k=O(n^d\log n)$, then a.a.s.\
%$\rcapt_k(G^d_n)=\Theta\left( n (\log k/k)^{1/d}\right)$.
\end{thm}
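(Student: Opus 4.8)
The plan is to prove matching upper and lower bounds. Throughout, write $C_d=\ceil{(d+1)/2}$ for the (constant) cop number of every $d$-dimensional grid, and set $f(n)=n/k^{1/d}$; the hypothesis $k=O(n^d)$ is exactly what guarantees $f(n)=\Omega(1)$, and $k\geq1$ gives $f(n)=O(n)$.

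For the lower bound I would invoke Lemma~\ref{thm:kcenterlb}, so that it suffices to show $\rad_k(G_n^d)=\Omega(f(n))$. The corollary's path-retract bound is too weak here (it only yields $\Omega(n/k)$), so I would argue directly in the grid metric, which is $\ell^1$. A ball of radius $r$ is contained in the box $\prod_i[v_i-r,v_i+r]$, hence has at most $(2r+1)^d$ vertices. If $k$ balls of radius $\rad_k$ cover all $n^d$ vertices, then $k(2\rad_k+1)^d\geq n^d$, so taking $d$-th roots gives $2\rad_k+1\geq f(n)$ and thus $\rad_k\geq\tfrac12(f(n)-1)$. Combining with the trivial bound $\capt_k(G_n^d)\geq1$ (valid for $k<n^d$) and $f(n)=\Omega(1)$, this yields $\capt_k(G_n^d)=\Omega(f(n))$, handling the boundary regime $f(n)=\Theta(1)$ as well.

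For the upper bound I would apply Theorem~\ref{thm:retractptn} to a grid partition. Choose $t=\floor{(k/C_d)^{1/d}}$, so that $t^dC_d\leq k$ and, for $k$ above a constant, $t=\Theta(k^{1/d})$. Partition each path factor $P_n$ into $t$ consecutive intervals of length $\floor{n/t}$ or $\ceil{n/t}$; the resulting $t^d$ product boxes tile $G_n^d$, and each is a retract of $G_n^d$ via coordinatewise clamping (a product of path retractions is a homomorphism of the Cartesian product that fixes the box). Assigning $C_d$ cops to each box and using monotonicity to assume exactly $t^dC_d\leq k$ cops, Theorem~\ref{thm:retractptn} gives $\capt_k(G_n^d)\leq\max_i\capt_{C_d}(\text{box}_i)$. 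Since each box is a $d$-dimensional grid with all sides at most $\ell:=\ceil{n/t}$, I would bound $\capt_{C_d}(\text{box}_i)\leq\capt_{C_d}(G_\ell^d)<\tfrac12\ell\,d\,\ceil{\log_2 d}=O(n/t)=O(f(n))$, using the capture-time bound for cubes cited from~\cite{BGKP}. Small $k$ (the constant range where $f(n)=\Theta(n)$) is covered trivially, since any capture time is at most $\diam(G_n^d)=O(n)$.

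The main obstacle is the step $\capt_{C_d}(\text{box}_i)\leq\capt_{C_d}(G_\ell^d)$, since the cited bound applies to cubes while the boxes may have unequal side lengths. I would establish it through a general observation that capture time is monotone under retracts: if $H$ is a retract of $G$ with retraction $f$, the cops can play on $H$ by maintaining virtual positions in $G$ governed by an optimal $G$-strategy and projecting each virtual position down via $f$. Because $f$ is a homomorphism, the projected moves are legal in $H$; because the robber stays in $H$ (where $f$ is the identity), a virtual capture in $G$ projects to a genuine capture in $H$ within the same number of rounds. Applying this with $H$ the box (a clamping-retract of the cube $G_\ell^d$) transfers the cube's $C_d$-cop strategy to the box at no cost in capture time. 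The remaining care is purely bookkeeping: the divisibility of $n$ by $t$, verifying $t=\Theta(k^{1/d})$ once $k$ exceeds a $d$-dependent constant, and stitching together the constant and growing regimes of $f(n)$.
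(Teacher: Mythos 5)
Your proof is correct in substance but takes a genuinely different route from the paper's on both bounds. For the lower bound, the paper packs $k+1$ pairwise-disjoint subgrids of side $\Theta(n/k^{1/d})$ and lets the robber sit still in a cop-free one (pigeonhole); you instead invoke Lemma~\ref{thm:kcenterlb} and bound $\rad_k(G_n^d)$ from below by the volume inequality $k(2r+1)^d\geq n^d$. The two arguments are cousins, but yours reuses the paper's $k$-center machinery and needs no packing construction. For the upper bound, the paper \emph{covers} the grid with possibly overlapping subgrids that are all genuine cubes $G_{n'}^d$ of equal side; the overlap is precisely what lets it quote the BGKP capture-time bound verbatim inside Theorem~\ref{thm:retractptn}. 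You instead \emph{partition} into boxes with sides $\floor{n/t}$ or $\ceil{n/t}$, which forces you to prove an extra lemma: capture time is monotone under retracts, $\capt_k(H)\leq\capt_k(G)$ for $H$ a retract of $G$. That lemma is true, and the simulate-and-project argument you sketch is the standard one (the same mechanism as Aigner--Fromme's retract argument for cop number); it is a nice reusable strengthening of the paper's Lemma~\ref{thm:kradiusretract}-style monotonicity. So the paper buys simplicity by allowing overlaps, while you buy an exact tiling at the cost of one additional (but clean and general) lemma. Both your argument and the paper's silently assume $k$ stays sufficiently below $n^d$, where the statement degenerates; that caveat is shared and harmless.

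One small repair is needed: your dismissal of the constant-$k$ regime via ``any capture time is at most $\diam(G_n^d)$'' is not justified, since capture time is not in general bounded by the diameter (cop-win graphs of order $n\geq 7$ can have capture time $n-4$ yet small diameter). The claim you actually need, $\capt_k(G_n^d)=O(n)$ for constant $k\geq\ceil{(d+1)/2}$, follows immediately from the cited bound $\capt_{\ceil{(d+1)/2}}(G_n^d)<\tfrac12 nd\ceil{\log_2 d}$ together with monotonicity of $\capt_k$ in $k$ --- that is, from your own scheme with $t=1$. So this is a one-line fix, not a gap.
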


\begin{proof}
We will first prove the upper bound on $\capt_k(G^d_n)$. Cover the grid $G^d_n$ by $\floor{k/c(G_n^d)}$ subgrids isomorphic to $G^d_{n'}$ for $n'=\Theta(n/ k^{1/d})$. (The subgrids may overlap, but
they cover all vertices of $G^d_n$.)  Since subgrids are retracts of the whole grid, Theorem~\ref{thm:retractptn} with $k_i=c(G_n^d)=c(G_{n'}^d)=\ceil{(d+1)/2}$ for all $i$ and $t=\floor{k/c(G_n^d)}$
gives the bound $\frac12 n' d\ceil{\log_2 d} = O(n')$ (since $d$ is constant) as desired. To prove the lower bound on $\capt_k(G^d_n)$, pack $k+1$ pairwise-disjoint subgrids isomorphic to $G^d_{n''}$
for $n''=\Theta(n/ k^{1/d})$, and place the $k$ cops in any arbitrary way. By the pigeonhole principle, at least one subgrid contains no cop. The robber starts in that subgrid and survives for
$\Omega(n/k^{1/d})$ rounds by not moving.
\end{proof}

We now consider the temporal speed-up of hypercubes. The cop number of $Q_n$, the hypercube on $2^n$ vertices, is $\ceil{\frac{n+1}{2}}$, with the associated capture time $\Theta(n\log n)$. The
coefficient hidden in the $\Theta(\cdot)$ notation is between $1/2$ and 1; see~\cite{BGKP} for more details. On the other hand, the domination number of $Q_n$ is $(1+o(1))\frac{2^n}{n}$, with the
associated capture time 1. Our goal in this section is to investigate the capture time for the number of cops between the cop number and the domination number.

\subsection{Upper bounds}

Let us start with the following result that works well for a small number of cops. Let us mention that this bound is not needed to prove Corollary~\ref{thm:Qnsummary} that summarizes results for
hypercubes but concentrates only on the order of magnitude of the $k$-capture time. However, it does give better constants in certain ranges of $k$.

\begin{thm}\label{thm:Qn-upperbound1}
Let $\omega = \omega(n)$ be a function tending to infinity arbitrarily slowly. Suppose that $k=k(n)$ is such that $c(Q_n) \le k \le 2^n/\omega$. Then
\[
\capt_k(Q_n) \le(1+o(1)) \log_2(2^n/k)\log\log(2^n/k).
\]
In particular, if $k=b^n$ for some $1<b<2$, then
\[
\capt_k(Q_n) \le(1+o(1)) ( 1 - \log_2b ) n \log n.
\]
\end{thm}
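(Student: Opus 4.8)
The plan is to partition $Q_n$ into a large family of pairwise-disjoint subcubes of a common, carefully chosen dimension $m$, treat each subcube as a separate territory, and invoke the retract-partition bound of Theorem~\ref{thm:retractptn}. Fixing the values of any $n-m$ of the $n$ coordinates yields $2^{n-m}$ disjoint subcubes, each isomorphic to $Q_m$ and each a retract of $Q_n$: the retraction projects the fixed coordinates onto their prescribed values and leaves the free coordinates untouched, which is a homomorphism restricting to the identity on the subcube. Since these $2^{n-m}$ subcubes cover $V(Q_n)$, Theorem~\ref{thm:retractptn} applies with $t=2^{n-m}$ and $k_i=c(Q_m)=\ceil{(m+1)/2}$ for every $i$, so that each team captures the projection of the robber and hence the team owning the cell actually containing the robber captures her. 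This gives
\[
\capt_{k'}(Q_n)\le\capt(Q_m)\le m\ceil{\log_2 m},
\]
where $k'=2^{n-m}\ceil{(m+1)/2}$ and the final inequality is the hypercube capture-time bound from~\cite{BGKP} (the case $q=2$, $d=m$ of the grid bound $\tfrac12 qd\ceil{\log_2 d}$). Because $\capt_k(Q_n)$ is monotonically decreasing in $k$, it then suffices to choose $m$ so that $k'\le k$ and to estimate $m\ceil{\log_2 m}$.

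For the calibration, write $L=\log_2(2^n/k)=n-\log_2 k$; the hypothesis $k\le 2^n/\omega$ forces $L\ge\log_2\omega\to\infty$, which is exactly what makes the $(1+o(1))$ factor and the iterated logarithm meaningful. I would take $m$ to be the least integer for which $k'=2^{n-m}\ceil{(m+1)/2}\le k$. Rearranging this inequality shows that the threshold satisfies
\[
m=L+\log_2\ceil{\tfrac{m+1}{2}}+O(1)=L+\log_2 L+O(1).
\]
Since $L\to\infty$, the correction $\log_2 L$ is of smaller order than $L$, so $m=(1+o(1))L$ and therefore $\ceil{\log_2 m}=(1+o(1))\log_2 L$. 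Substituting into the displayed bound yields
\[
\capt_k(Q_n)\le m\ceil{\log_2 m}=(1+o(1))\,L\log_2 L=(1+o(1))\log_2(2^n/k)\,\log\log(2^n/k),
\]
as required. Along the way one checks that $m$ stays admissible, i.e. $m\le n$: at the lower extreme $k=c(Q_n)=\ceil{(n+1)/2}$ the threshold is $m=n$ (the partition degenerates to the single subcube $Q_n$, recovering $\capt(Q_n)=O(n\log n)$), and $m$ only decreases as $k$ grows.

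The only genuine obstacle is this asymptotic calibration of $m$: one must confirm that the lower-order term $\log_2\ceil{(m+1)/2}$ in the threshold equation is negligible against the leading term $L$, so that the leading constant in the final bound is exactly $1$ rather than merely $O(1)$. This is precisely where the hypothesis $k\le 2^n/\omega$ (equivalently $L\to\infty$) enters. The special case is then immediate: for $k=b^n$ with $1<b<2$ one has $2^n/k=(2/b)^n$, hence $\log_2(2^n/k)=(1-\log_2 b)\,n$ and $\log\log(2^n/k)=\log\bigl(n\log(2/b)\bigr)=(1+o(1))\log n$, and substitution into the general bound gives $\capt_k(Q_n)\le(1+o(1))(1-\log_2 b)\,n\log n$.
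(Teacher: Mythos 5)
Your construction is exactly the paper's: partition $V(Q_n)$ into $2^{n-m}$ disjoint subcubes isomorphic to $Q_m$, observe that subcubes are retracts of the reflexive cube, apply Theorem~\ref{thm:retractptn} with one team of $c(Q_m)=\ceil{(m+1)/2}$ cops per subcube, and take $m$ minimal so that the total number of cops required is at most $k$. Your calibration $m=(1+o(1))\log_2(2^n/k)$ coincides with the paper's choice of the subcube dimension $\ell$, and you correctly identify that the hypothesis $k\le 2^n/\omega$ is what forces $\log_2(2^n/k)\to\infty$ and hence makes the $(1+o(1))$ factor legitimate.

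The one substantive discrepancy is the black-box bound you invoke on each territory, and it affects the leading constant, which is the entire point of this theorem (the paper notes it is not needed for Corollary~\ref{thm:Qnsummary} and is stated only because ``it does give better constants in certain ranges of $k$''). You use the Cartesian-grid bound $\tfrac12 qd\ceil{\log_2 d}$ with $q=2$, giving $\capt(Q_m)\le m\ceil{\log_2 m}$, whereas the paper invokes the sharper hypercube-specific result of~\cite{BGKP}, namely $\capt(Q_m)\le(1+o(1))\,m\log m$ with the natural logarithm (this is the theorem whose hidden constant relative to $n\ln n$ lies between $1/2$ and $1$; the grid bound has constant $1/\ln 2\approx 1.44$ in that normalization). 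Consequently your final bound is $(1+o(1))\log_2(2^n/k)\log_2\log_2(2^n/k)$, which exceeds the bound obtained in the paper, $(1+o(1))\log_2(2^n/k)\,\ln\ln(2^n/k)$, by a factor of $1/\ln 2\approx 1.44$. Your reading of the statement's unsubscripted $\log\log$ as base $2$ would make your bound match, but the paper's own proof and its citation of the $\Theta(n\ln n)$ result indicate the natural-logarithm reading, under which your argument falls short of the claimed constant. The fix is immediate: cite $\capt(Q_m)\le(1+o(1))m\log m$ from~\cite{BGKP} in place of the grid bound; everything else in your argument, including the admissibility check $m\le n$ and the treatment of the special case $k=b^n$, then goes through verbatim.
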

\begin{proof}%[Proof (sketch)]
Given $\ell$ a nonnegative integer such that $\ell \le n$, we can partition $V(Q_n)$ into $2^{n-\ell}$ sets, each inducing a copy of $Q_\ell$. The cop number of $Q_\ell$ is $c=\ceil{(\ell+1)/2} \le
\ell$, and the capture time is at most $(1+o(1))\ell\log\ell$ (see~\cite{BGKP}). Since subcubes are retracts (the standard projection maps are homomorphisms for the \emph{reflexive} cubes), we apply
Theorem~\ref{thm:retractptn} with $t=2^{n-\ell}$ and $k_i\geq c$ for all $i$.
This requires the number of cops to be at least $2^{n-\ell} c$, which clearly holds if $2^\ell / \ell \ge 2^n / k$. In order to obtain the best bound, we choose $\ell$ to be the minimum integer such
that $2^\ell / \ell \ge 2^n / k$. Since $k\leq 2^n/\omega$, it follows that $2^n/k\to\infty$ and so $\ell \sim \log_2 ( 2^n / k)$.  (Indeed, note that if $\ell = \log_2 ( 2^n / k)$, then
$2^{\ell}/\ell = (2^n/k)/\ell < 2^n/k$; but if $\ell = \log_2 ( 2^n / k) + 2 \log_2 \log_2 ( 2^n / k) \sim \log_2 ( 2^n / k)$, then $2^{\ell}/\ell \sim (2^n/k) \ell \ge 2^n/k$.) The conclusion from
Theorem~\ref{thm:retractptn} is then $\capt_k(Q_n)\leq \capt_{k_i}(Q_{\ell})$, which is in turn at most $\capt_c(Q_{\ell})\sim \ell\log\ell \sim \log(2^n/k)\log\log(2^n/k)$.
\end{proof}

To prove the next result, we will use the following version of \emph{Chernoff's bound}. Suppose that $X \in \Bin(n,p)$ is a binomial random variable with expectation $\mu=np$. If $0<\delta<3/2$, then
\begin{equation}\label{eq:Chern}
\prob{ |X- \mu| \ge \delta \mu } \le 2 \exp \left( -\frac{\delta^2 \mu}{3} \right).
\end{equation}
(For example, see Corollary~2.3 in~\cite{JLR}.) It is also true that~(\ref{eq:Chern}) holds for a random variable with the hypergeometric distribution. The \emph{hypergeometric distribution} with
parameters $N$, $n$, and $m$ (assuming $\max\{n,m\} \le N$) is defined as follows. Let $\Gamma$ be a set of size $n$ taken uniformly at random from set $[N]$. The random variable $X$ counts the
number of elements of $\Gamma$ that belong to $[m]$; that is, $X = |\Gamma \cap [m]|$. It follows that~(\ref{eq:Chern}) holds for the hypergeometric distribution with parameters $N$, $n$, and $m$,
with expectation $\mu = nm/N$. (See, for example, Theorem~2.10 in \cite{JLR}.)

Given a vertex $v$ and an integer $0\le i\le n$, $N_i(v)$ is the set of vertices at distance exactly $i$ from $v$ and $N_{\le i}(v) = \bigcup_{j=0}^i N_j(v)$. Now, we are ready to state the upper
bound that works well for a large number of cops.

\begin{thm}\label{thm:Qn-upperbound2}
Suppose that $k = k(n) \in \nat$ is such that
\begin{equation}\label{eq:cops_lower_bound}
k\ge 36 \cdot 2^n \ \frac {(2d+1)_{d+1}}{ (n-d)_{d+1} },
\end{equation}
for some $d \le cn-2$, where $c = 1/2 - \sqrt{2}/4 \approx 0.1464$. Then, for $n$ large enough,
$$
\capt_k(Q_n) \le 2d+1.
$$
In particular, the desired upper bound for the capture time holds provided that
$$
k\ge 36 \cdot 2^n n \ \left( \frac {3d}{n(1-c)} \right)^{d+1}.
$$
\end{thm}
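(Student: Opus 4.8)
The plan is to place the $k$ cops at $k$ uniformly random vertices of $Q_n$ and to argue that, with positive probability, the resulting configuration lets the cops guarantee a capture within $2d+1$ rounds no matter where the robber starts. Exhibiting even one such placement suffices, since it yields a deterministic cop strategy and hence the bound $\capt_k(Q_n)\le 2d+1$. I would split the argument into two essentially independent pieces: a deterministic \emph{local capture lemma}, stating that if sufficiently many cops occupy a small ball around a vertex $v$ then a robber that starts at $v$ is caught within $2d+1$ rounds; and a \emph{probabilistic covering step}, showing that a random placement of $k$ cops fulfils the hypothesis of the capture lemma simultaneously at every vertex $v$.

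For the covering step, fix a vertex $v$ and consider the ball $B_v=N_{\le d+2}(v)$, and let $X_v$ be the number of cops in $B_v$. Because the cops are placed uniformly, $X_v$ has a hypergeometric distribution with mean $\mu_v=k\,|B_v|/2^n$. Using the identity $(2d+1)_{d+1}/(n-d)_{d+1}=\binom{2d+1}{d+1}/\binom{n-d}{d+1}$ together with the hypothesis on $k$ and the estimate $|B_v|=\Theta\!\left(\binom{n}{d+2}\right)$, I would verify that $\mu_v=\Omega(n)$, the constant $36$ being chosen precisely so that $\mu_v$ comfortably exceeds a fixed multiple of $n$. This is exactly where the assumption $d\le cn-2$, equivalently $d+2\le cn<n/2$, is used: it keeps $B_v$ in the sub-linear regime where these binomial estimates are valid, and the particular value of $c$ is calibrated so that the concentration below goes through. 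A single application of the hypergeometric tail bound \eqref{eq:Chern}, with $\delta$ a fixed constant, then gives $\prob{X_v<\mu_v/2}\le 2\exp(-\Omega(\mu_v))<2^{-n}$, and a union bound over the $2^n$ vertices $v$ leaves positive probability that every ball $B_v$ contains at least $\mu_v/2$ cops.

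The substance of the proof is the capture lemma, and I expect this to be the main obstacle. Here one must exhibit a pursuit for the $\Omega(n)$ cops clustered in $B_v$ that captures, within $2d+1$ rounds, a robber that begins at $v$. The natural mechanism exploits the single structural feature of the hypercube metric: one robber step alters its Hamming distance to any fixed cop by exactly $\pm1$. With a dense cloud of cops advancing on the robber, no coordinate flip can simultaneously increase the robber's distance to all of the nearest cops, so the minimum cop--robber distance is driven down and reaches $0$ in at most $2d+1$ rounds. Turning this intuition into a rigorous bound---specifying which cops chase and how they track the robber as it maneuvers, and showing that the number of rounds is at most $2d+1$ rather than merely $O(d)$ with a worse constant---is the delicate part of the argument.

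Finally, the displayed \emph{in particular} bound is a routine consequence. Writing $(2d+1)_{d+1}/(n-d)_{d+1}=\prod_{i=0}^{d}\frac{2d+1-i}{n-d-i}$, each factor is maximized at $i=0$ (the factors decrease in $i$ since $n>3d+1$), so each is at most $\frac{2d+1}{n-d}\le\frac{3d}{n(1-c)}$, using $2d+1\le 3d$ for $d\ge1$ and $n-d\ge n(1-c)$ from $d\le cn$. Hence the ratio is at most $\left(\frac{3d}{n(1-c)}\right)^{d+1}\le n\left(\frac{3d}{n(1-c)}\right)^{d+1}$, so the cleaner hypothesis $k\ge 36\cdot 2^n n\left(\frac{3d}{n(1-c)}\right)^{d+1}$ implies the original one. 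I would carry out this purely computational step last.
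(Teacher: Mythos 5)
Your overall frame---uniformly random cop placement, a union bound over the $2^n$ possible robber starting vertices, and the closing ``in particular'' computation---matches the paper, and that last computational step is correct. But there is a genuine gap at the center: the ``local capture lemma'' is essentially the whole content of the theorem, you leave it unproved, and the reduction you set up cannot deliver it. The obstruction is quantitative as well as structural. Any barrier strategy of the kind the paper uses catches the robber within $2d+1$ rounds by occupying the \emph{full sphere} $N_d(v)$ with distinct cops, and that sphere has $\binom{n}{d}$ vertices; under hypothesis~(\ref{eq:cops_lower_bound}) the expected number of cops on the sphere $N_{2d+1}(v)$ is at least $36\binom{n}{d}$ (by the identity $\binom{n}{2d+1}\binom{2d+1}{d+1}=\binom{n}{d}\binom{n-d}{d+1}$), and essentially all of them are needed. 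Your covering step, by contrast, records only the \emph{number} of cops in $N_{\le d+2}(v)$, whose expectation under the same hypothesis is $\Theta\bigl(\binom{2d+1}{d+1}\,n/d\bigr)$---vastly smaller than $\binom{n}{d}$ for every $d\ge 2$. Worse, a pure cardinality hypothesis cannot support any capture argument: all of the cops counted in $B_v$ could sit inside a subcube spanned by $O\bigl(d\,n^{1/(d+2)}\bigr)$ coordinates (such a subcube already contains $\Omega(n)$ vertices at distance $d+2$ from $v$), and such a clustered cloud cannot cover, or come close to covering, any sphere around $v$, while the robber has almost $n$ coordinates that no nearby cop contests. Relatedly, your heuristic that ``no coordinate flip can increase the distance to all nearest cops'' is not a capture mechanism: the robber never needs to increase her distance to every cop, only to keep every cop at distance at least two at the end of each of her moves, and against $O(n)$ clustered cops she can do this for far longer than $2d+1$ rounds. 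So the lemma, with the hypothesis your covering step actually provides, is at best unproven and most likely false; the information discarded in passing to a head count is precisely what the proof needs.

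What the missing lemma requires---and what the paper proves---is a system of distinct representatives, not a head count. The paper applies Hall's theorem to the bipartite graph whose sides are $N_d(v)$ and the cop-occupied vertices of $N_{2d+1}(v)$, with $w\in N_d(v)$ joined to cops at distance \emph{exactly} $d+1$ from $w$. A double count (each vertex of $N_d(v)$ has $\binom{n-d}{d+1}$ such partners in $N_{2d+1}(v)$, and each vertex of $N_{2d+1}(v)$ has $\binom{2d+1}{d+1}$ in $N_d(v)$) shows that every $S\subseteq N_d(v)$ with $|S|=s$ has at least $\frac{(n-d)_{d+1}}{(2d+1)_{d+1}}\,s$ candidate partner vertices; this is exactly where the ratio in~(\ref{eq:cops_lower_bound}) enters, and the constant $36$ makes $\E{X}\ge 36ns$, so that Chernoff's bound beats a union bound over all $\binom{\binom{n}{d}}{s}\le 2^{ns}$ sets $S$ of every sphere---the union bound must run over subsets of spheres, not merely over the $2^n$ vertices $v$ as in your version. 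The matched cops then walk to their targets in exactly $d+1$ steps; since the cops move first, the robber is trapped in $N_{\le d}(v)$; and a second, deterministic Hall argument between consecutive layers ($|N(S)\cap N_{i+1}(v)|\ge\frac{n-i}{i+1}|S|\ge|S|$, valid because $i<n/2$) lets the cops sweep inward one layer per step for $d$ further steps, giving $2d+1$ in total. Note also that the role of $d\le cn-2$ is to keep $i<cn<n/2$ in this layer-sweeping step (the precise value of $c$ only matters for the monotonicity remark following the theorem), not, as you suggest, to keep $|B_v|$ sublinear or to calibrate the concentration. To repair your proof you would have to replace the per-vertex counting step by exactly this per-subset expansion statement, at which point you will have reproduced the paper's argument.
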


Before we move to the proof of the theorem, let us mention that the condition for $d$ is, in some sense, not needed and it does not make the result weaker. Indeed, note that after replacing $d$ by
$d+1$, the lower bound~(\ref{eq:cops_lower_bound}) for the number of cops is affected by the multiplicative constant
$$
\frac { (2d+3)_{d+2} / (n-d-1)_{d+2} } { (2d+1)_{d+1} / (n-d)_{d+1} } = \frac { (2d+3)(2d+2)(n-d) } { (d+1)(n-2d-1)(n-2d-2) } \le \frac { 4c (1-c)}{ (1-c)^2 } = 1.
$$
Hence, only up to this point the lower bound for the number of cops is a decreasing function of $d$. After removing this artificial restriction on $d$, there would be more choices for $d$ to satisfy
the desired condition but clearly one should consider the smallest value of $d$ to get the best bound.

We observe that since $Q_n$ is a relatively good expander, the proof follows similar ideas as the ones used to bound the cop number for random graphs~\cite{Expansion3,Expansion1, Expansion2}.

\begin{proof}[Proof of Theorem~\ref{thm:Qn-upperbound2}]
We distribute all the cops \emph{at random}; that is, cops select a set of vertices of cardinality $k$ uniformly at random, and then they start on this set.
 Suppose that the robber starts the game on vertex $v$. Our goal is to show that a.a.s., regardless where she starts, after $d+1$ (cops') moves the cops can completely occupy
$N_d=N_d(v)$. As the first move belongs to the cops, the robber will not be able to escape from the ball $N_{\le d}(v)$ around her initial position; she will be ``trapped'' there.

We are going to show that with probability at least $1-2^{-2n+2}$, there exists a matching saturating $N_d$ between vertices of $N_d$ and cops initially occupying $N_{2d+1}=N_{2d+1}(v)$. In order to
do it, we are going to use Hall's theorem for matchings in bipartite graphs. A ``neighbour'' in $N_{2d+1}$ of a vertex $w\in N_d$ (in this auxiliary bipartite graph) is a vertex in $N_{2d+1}$ that
contains a cop and is at distance exactly $d+1$ from $w$. For a given $S\subseteq N_d$ of size $s=|S|\ge1$, we wish to find $t_s$, a lower bound for the number of vertices in $N_{2d+1}$ at distance
$d+1$ from some vertex in $S$. As each vertex in $N_d$ has $\binom{n-d}{d+1}$ vertices in $N_{2d+1}$ that are at distance $d+1$, and each vertex in $N_{2d+1}$ has $\binom{2d+1}{d+1}$ vertices in
$N_d$ that are at distance $d+1$, we get
\[
t_s \ge \frac {\binom{n-d}{d+1}} {\binom{2d+1}{d+1}}  s = \frac{ (n-d)_{d+1} } { (2d+1)_{d+1} }  s.
\]
Let $X$ be the random variable counting how many of these vertices initially contain cops. Using the assumption for $k$, we get that $\E{X} \ge t_s k/2^{n} \ge 36 n s$, and it follows from Chernoff's
bound (applied to $X$, a hypergeometric random variable) that
\[
\prob{X < s} \le \prob{X \le \E{X}/2 } \le 2 \exp(-  \E{X}/12 ) \le 2 \exp(- 3 n s).
\]
Taking a union bound over all $\binom{\binom{n}{d}}{s} \le \binom{2^n}{s} \le 2^{ns}$ choices for sets $S$ of cardinality $s$, we conclude that with probability at least $1- 2^{-2ns+1}$, Hall's
condition holds for all sets of size $s$. Summing the failure probability over $1\le s\le\binom{n}{d}$, we get that the desired condition holds for all sets with probability at least $1-2^{-2n+2}$.
Finally, by taking a further union bound over all $2^n$ choices for $v$, the initial vertex the robber starts on, we conclude that a.a.s., regardless where the robber initially starts, the desired
matching can be found. We may assume then that this is the case.

Let us suppose that the robber starts at vertex $v$.  We give a strategy for the cops for the remainder of the game.  The cops in $N_{2d+1}(v)$ move to destinations in $N_d(v)$ according to the
matching guaranteed above (moving along any shortest path), thereby occupying every vertex of $N_d(v)$, taking exactly $d+1$ steps.  As we already mentioned, the robber is now ``trapped'' in the ball
around $v$. In the next $d$ steps, the cops move towards $v$ by covering at each step one full layer $N_i(v)$.  Note that for any  $i$ with $1\le i \le d-1$ (and in particular $i<cn<n/2$), there
exists a matching between $N_i(v)$ and $N_{i+1}(v)$ saturating $N_i(v)$. Indeed, arguing as before we notice that for any $S \subseteq N_i(v)$, $|N(S) \cap N_{i+1}(v)| \ge \frac {n-i}{i+1} |S| \ge
|S|$ and so Hall's condition holds for the bipartite graph induced by layers $N_i(v)$ and $N_{i+1}(v)$. The robber is captured after another $d$ steps, and the proof is finished.
\end{proof}

\subsection{Lower bounds}

As in the previous subsection, let us start with the results that works well for a small number of cops.

\begin{thm}\label{thm:Qn-lowerbound1}
Fix any constants $0<\alpha<\alpha'<1$, and suppose that $c(Q_n) \le k = k(n) \le e^{n^\alpha}$. Then,
\[
\capt_k(Q_n) \ge \frac{1-\alpha'}{2}(n-1)\log n.
\]
\end{thm}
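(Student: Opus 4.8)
The plan is to exhibit a strategy for the robber and show she evades capture for at least $\frac{1-\alpha'}{2}(n-1)\log n$ rounds, generalizing the $k=c(Q_n)$ lower bound of \cite{BGKP}. The key simplification is that the robber survives a given cops' move precisely by being able to reach a vertex at distance at least $2$ from every cop: since each cop moves along at most one coordinate per turn, any such vertex is immune to capture on the cops' next move. So the whole problem reduces to showing that the cops cannot, for that many rounds, force the robber into a position where every vertex of her closed neighbourhood lies within distance $1$ of some cop.

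First I would fix the robber's opening using a volume bound, exactly as in the philosophy of Lemma~\ref{thm:kcenterlb}. Because $k \le e^{n^\alpha}$, the union $\bigcup_j B(c_j,\rho)$ has size at most $k\binom{n}{\le\rho}$, and since $\log k \le n^\alpha = o(n)$ one may take $\rho = n/2 - o(n)$ while keeping this below $2^n$. Hence the robber can start at distance $n/2 - o(n)$ from every cop. This alone forces $\Omega(n)$ rounds of chasing, but it does not produce the $\log n$ factor, which must instead come from the dynamics of the pursuit.

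The heart of the argument is a coupon-collector–style potential controlling how fast the cops can close in while the robber keeps fleeing. At each of her turns I would have the robber flip a coordinate on which she agrees with every nearby cop (those within a fixed threshold distance), so that her move does not decrease her distance to any of them; such a coordinate exists as long as the total number of coordinates \emph{blocked} by nearby cops is less than $n$. A cop at distance $d$ from the robber blocks only the $d$ coordinates in which they disagree, so she stays free while $\sum_{j\ \text{nearby}}\dist(r,c_j)<n$. I would then track the number $f_t$ of coordinates still available to her and argue that each round the $k$ cops can convert free coordinates into permanently blocked ones only at a rate proportional to $f_t/n$, since pinning down a coordinate against a fleeing robber commits a cop to it. A multiplicative-decay recursion of the form $f_{t+1}\ge f_t\bigl(1-\Theta(1/n)\bigr)$ then keeps $f_t$ positive for $\Theta(n\log n)$ rounds, which is exactly the coupon-collector time setting the $\frac12 n\log n$ scale.

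To land the coefficient $\frac{1-\alpha'}{2}$ rather than a mere $\Omega(n\log n)$, I would feed $k\le e^{n^\alpha}$ into the counting, using hypergeometric/Chernoff estimates in the spirit of the proof of Theorem~\ref{thm:Qn-upperbound2} (cf.\ \cite{JLR}) together with a union bound over the robber's possible positions to control how many coordinates the $k$ cops can simultaneously threaten. Since $\log k\le n^\alpha$ and $\alpha<\alpha'$, the extra cops speed up the decay only by a factor that degrades the constant from $\frac12$ to $\frac{1-\alpha'}{2}$, with the gap $\alpha'-\alpha>0$ absorbing all lower-order error terms. The main obstacle is precisely this potential analysis: proving, uniformly over all optimal cop strategies, both that the robber always retains a safe fleeing coordinate and that the count of such coordinates cannot be driven down faster than the claimed multiplicative rate. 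Making that rate sharp enough to yield the exact coefficient $\frac{1-\alpha'}{2}$ in $\capt_k(Q_n)$ is the delicate part of the proof.
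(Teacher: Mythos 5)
Your proposal has a genuine gap at its centre, and it is exactly the step you yourself flag as ``the delicate part'': the multiplicative-decay recursion $f_{t+1}\ge f_t\bigl(1-\Theta(1/n)\bigr)$ is never justified, and I do not see how it could be against adversarial cops. The notion of a coordinate being ``permanently blocked'' has no meaning in hypercube pursuit --- a cop who matches the robber on a coordinate becomes unmatched the instant she flips it --- and a deterministic fleeing rule of the kind you describe (flip a coordinate on which you agree with every nearby cop) is exploitable: with $k$ as large as $e^{n^{\alpha}}$ the cops can arrange that the sum of distances to ``nearby'' cops exceeds $n$ long before $\Theta(n\log n)$ rounds have elapsed, at which point your strategy has no safe coordinate and the analysis collapses. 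Relatedly, the constant $\frac{1-\alpha'}{2}$ in your sketch is reverse-engineered: you assert that the extra cops ``degrade the constant from $\frac{1}{2}$ to $\frac{1-\alpha'}{2}$,'' but you exhibit no mechanism that produces that particular number.

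The missing idea is the probabilistic method. The paper's proof (a few lines) has the robber perform a \emph{random walk} on $Q_n$ and invokes the single-cop deviation estimate behind Theorem~8 of \cite{BGKP}: against any adaptive strategy of a single cop, the probability that a random-walking robber is caught within $(1-\epsilon)T$ rounds, where $T=\frac{1}{2}(n-1)\log n$, is at most $\exp\bigl(-(n/2)^{\epsilon}/4\bigr)$. Taking $\epsilon=\alpha'$ and union bounding over the $k$ cops gives failure probability at most $k\exp\bigl(-(n/2)^{\alpha'}/4\bigr)$, which is $o(1)$ precisely because $\log k\le n^{\alpha}$ and $\alpha<\alpha'$ --- this is where the hypothesis $k\le e^{n^{\alpha}}$ and the coefficient $\frac{1-\alpha'}{2}$ actually enter. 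Since for every cop strategy some realization of the walk survives $(1-\alpha')T$ rounds, it follows that $\capt_k(Q_n)\ge(1-\alpha')T$. Your opening volume bound (starting at distance $n/2-o(n)$ from all cops) is fine but, as you note, yields only $\Omega(n)$; to obtain the $\log n$ factor one needs a per-cop survival estimate strong enough to absorb a union bound over $e^{n^{\alpha}}$ cops, and the random walk supplies exactly that, with no need to control any potential function uniformly over cop strategies.
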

\begin{proof} We provide a sketch of the proof only. The robber performs a random walk on $Q_n$. Following the proof of Theorem~8 in~\cite{BGKP} with $T=(1/2)(n-1)\log n$ and $\epsilon=\alpha'$, the probability that any of the cops captures the robber
in under $(1-\epsilon)T$ rounds is at most $k \exp(-(n/2)^{\alpha'}/4) = o(1)$.
\end{proof}

The next result works well for a large number of cops.

\begin{thm}\label{thm:Qn-lowerbound2}
Suppose that $k = k(n) \in \nat$ is such that
$$
k< \frac {2^n}{\sum_{i=0}^d \binom{n}{i}}.
$$
Then,
$$
\capt_k(Q_n) > d.
$$
In particular, the desired lower bound for the capture time holds, provided that
$$
k < \frac 12 \cdot 2^n \left( \frac {d}{en} \right)^{d},
$$
for some $d \le n/3$.
\end{thm}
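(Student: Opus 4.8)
The plan is to exhibit a robber strategy that guarantees survival for more than $d$ rounds against \emph{any} placement of the $k$ cops, via a volume/pigeonhole argument on balls in $Q_n$. Recall that in $Q_n$ the ball of radius $d$ about any vertex has exactly $|N_{\le d}(u)| = \sum_{i=0}^d \binom{n}{i}$ vertices. First I would let the cops fix arbitrary starting vertices $u_1,\dots,u_k$ and consider the union $\bigcup_{j=1}^k N_{\le d}(u_j)$ of the balls of radius $d$ about them, whose size is at most $k\sum_{i=0}^d\binom{n}{i}$. By the hypothesis $k<2^n/\sum_{i=0}^d\binom{n}{i}$, this is strictly less than $2^n=|V(Q_n)|$, so there exists a vertex $v$ lying outside all these balls, i.e. with $\dist(u_j,v)>d$, equivalently $\dist(u_j,v)\ge d+1$, for every $j$.

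The robber then simply starts at such a $v$ and never moves. Since she is stationary, the only way a cop starting at $u_j$ can reach her is to walk to $v$, which takes at least $\dist(u_j,v)\ge d+1$ of its moves. As the cops move first and then alternate with the robber, after $d$ rounds every cop has made only $d$ moves, so no cop can occupy $v$ before the end of round $d+1$; in particular the robber survives rounds $1,\dots,d$. Because this response is available against every initial cop placement (the robber sees the cops' starting vertices before choosing $v$), it holds in particular against the optimal placement, and we conclude $\capt_k(Q_n)\ge d+1>d$, as required.

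For the ``in particular'' statement I would show that the simpler hypothesis $k<\tfrac12\cdot 2^n\,(d/(en))^d$ implies the main hypothesis whenever $d\le n/3$. The key estimate is a geometric bound on the partial binomial sum: for $i\le d\le n/3$ the ratio $\binom{n}{i-1}/\binom{n}{i}=i/(n-i+1)\le (n/3)/(2n/3+1)<1/2$, so the terms $\binom{n}{i}$ grow by a factor exceeding $2$ up to $i=d$, giving $\sum_{i=0}^d\binom{n}{i}\le \binom{n}{d}\sum_{j\ge0}2^{-j}=2\binom{n}{d}$. Combining this with the standard inequality $\binom{n}{d}\le(en/d)^d$ yields $\sum_{i=0}^d\binom{n}{i}\le 2(en/d)^d$, hence $2^n/\sum_{i=0}^d\binom{n}{i}\ge \tfrac12\cdot 2^n\,(d/(en))^d$, and the implication follows.

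Frankly there is no serious obstacle here, as the argument is purely a counting bound; the only points requiring care are (i) being precise about the turn order so that $\dist(u_j,v)>d$ really does translate into surviving more than $d$ rounds, and (ii) confirming that the regime $d\le n/3$ makes the partial binomial sum geometric, so that it is controlled by twice its last term.
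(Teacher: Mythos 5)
Your proposal is correct and is essentially the paper's own argument: the same pigeonhole/volume bound showing some vertex lies at distance at least $d+1$ from every cop, followed by the stationary-robber strategy, and the same estimate $\sum_{i=0}^d\binom{n}{i}\le 2\binom{n}{d}\le 2(en/d)^d$ for the ``in particular'' clause. The only difference is that you spell out the geometric-series bound on the binomial sum, which the paper leaves implicit.
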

\begin{proof}
In $d$ steps, any cop can reach $\sum_{i=0}^d\binom{n}{i}$ vertices. Therefore, regardless of how cops are initially distributed, they can reach at most $k \sum_{i=0}^d\binom{n}{i} < 2^n$ vertices in
$d$ steps. Hence, the robber can pick an initial vertex that is at distance at least $d+1$ from any cop and stay put. She clearly survives for more than $d$ rounds.

The second part follows from the fact that for any $d \le n/3$ we have
\[
\frac {2^n}{\sum_{i=0}^d \binom{n}{i}} \ge \frac {2^n}{2 \binom{n}{d}} \ge \frac {2^n}{2 (en/d)^d}. \qedhere
\]
\end{proof}

\subsection{Summary and open questions}

In this section we summarize the results for hypercubes, highlighting what remains to be investigated.
It seems that the behaviour of the capture time is well understood for all cases except part~(ii).

\begin{cor}\label{thm:Qnsummary}
Let $\varepsilon >0$,
\begin{eqnarray*}
g(x) &=& 2x \log_2(2x) + (1-2x) \log_2(1-2x) - x \log_2 x - (1-x) \log_2 (1-x),\\
c &=& 1/2 - \sqrt{2}/4 \approx 0.1464, \quad \text{and} \\
b &=& - g(c) \approx 0.2716.
\end{eqnarray*}
Suppose $k\geq c(Q_n)$.  The following hold for large enough $n$.
\begin{enumerate}[(i)]
\item If $k \le 2^{n^\alpha}$ for some $\alpha < 1$, then $\capt_k(Q_n) = \Theta( n \log n )$.
\item If $k \le 2^{n(1-b+\varepsilon)}$, then $\Omega(n) = \capt_k(Q_n) = O( n \log n )$.
\item  If $2^{n(1-b+\varepsilon)} < k \le 2^{n(1-\varepsilon)}$, then $\capt_k(Q_n) = \Theta(n)$.
\item  If $k = 2^{n-f(n)}$ with $\log n \ll f(n)=o(n)$, then
\[
\capt_k(Q_n) = \Theta \left( \frac {f(n)}{\log (n / f(n))} \right) = \Theta \left( \frac {n}{\omega \log \omega} \right)
\]
where $\omega=\omega(n)=n/f(n)$ (note that $\omega\to\infty$, so $1 \ll \capt_k(Q_n) = o(n)$).
\item If $k = 2^{n-f(n)}$ with $f(n)=O(\log n)$ (which is equivalent to $k \ge 2^n / n^{O(1)}$), then $\capt_k(Q_n) = O(1)$.
\end{enumerate}
\end{cor}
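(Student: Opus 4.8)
The plan is to assemble the corollary directly from the four preceding theorems (Theorems~\ref{thm:Qn-upperbound1}, \ref{thm:Qn-upperbound2}, \ref{thm:Qn-lowerbound1}, \ref{thm:Qn-lowerbound2}), together with two elementary facts: that $\capt_k(Q_n)$ is monotonically decreasing in $k$, and that $\capt_{c(Q_n)}(Q_n)=\Theta(n\log n)$. The only analytic ingredients beyond these are the standard entropy estimate $\sum_{i=0}^{\delta n}\binom{n}{i}=2^{(1+o(1))H(\delta)n}$ (valid for $0<\delta<1/2$, with $H$ the binary entropy) and a few Stirling/falling-factorial estimates for $\binom{2d+1}{d+1}/\binom{n-d}{d+1}$. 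I would treat each regime (i)--(v) separately, matching it to the theorem whose hypothesis covers that range of $k$, and in each case convert the hypothesis into a clean threshold on $\tfrac1n\log_2 k$.

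For the two logarithmic regimes I would argue as follows. In (i), the upper bound $O(n\log n)$ is immediate from monotonicity and $\capt_{c(Q_n)}(Q_n)=\Theta(n\log n)$, while the matching lower bound comes from Theorem~\ref{thm:Qn-lowerbound1}: since $2^{n^\alpha}=e^{(\ln 2)n^\alpha}\le e^{n^{\alpha''}}$ for any $\alpha<\alpha''<1$ and large $n$, the hypothesis is met and $\capt_k(Q_n)\ge\frac{1-\alpha'}{2}(n-1)\log n=\Omega(n\log n)$. For (ii), the upper bound $O(n\log n)$ is again monotonicity; the lower bound $\Omega(n)$ follows from Theorem~\ref{thm:Qn-lowerbound2} applied with $d=\delta n$ for any fixed $\delta>0$ satisfying $H(\delta)<b-\varepsilon$ (such $\delta$ exists because $H$ is continuous with $H(0)=0$ and $b-\varepsilon>0$ once $\varepsilon<b$), since then $k\le 2^{n(1-b+\varepsilon)}<2^{n(1-H(\delta))}\le 2^n/\sum_{i\le\delta n}\binom{n}{i}$.

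The heart of the argument is the linear regime (iii), and it is where I expect the real work. The lower bound $\Omega(n)$ for $k\le 2^{n(1-\varepsilon)}$ is again Theorem~\ref{thm:Qn-lowerbound2} with $d=\delta n$ chosen so that $H(\delta)<\varepsilon$. The upper bound $O(n)$ for $k>2^{n(1-b+\varepsilon)}$ is Theorem~\ref{thm:Qn-upperbound2} taken at $d=cn$ (with $c=1/2-\sqrt2/4$), which yields $\capt_k(Q_n)\le 2cn+1=O(n)$ provided $k\ge 36\cdot 2^n\binom{2d+1}{d+1}/\binom{n-d}{d+1}$. The crux is to verify that this threshold equals $2^{n(1-b)+o(n)}$; concretely, I would show $\frac1n\log_2\bigl(\binom{2cn+1}{cn+1}/\binom{n-cn}{cn+1}\bigr)\to -b$, i.e.\ that $(1-c)H\!\bigl(\tfrac{c}{1-c}\bigr)-2c=b=-g(c)$. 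This is a direct (if slightly tedious) entropy computation using $\tfrac1n\log_2\binom{2cn}{cn}\to 2c$ and $\tfrac1n\log_2\binom{(1-c)n}{cn}\to(1-c)H(\tfrac{c}{1-c})$, after which $k>2^{n(1-b+\varepsilon)}$ beats the $o(n)$ error in the exponent. I would also record, from the remark following Theorem~\ref{thm:Qn-upperbound2}, that the threshold is minimized precisely at $c=1/2-\sqrt2/4$, which is what makes $b$ the correct break-point and explains why the genuine gap in the classification sits in (ii) rather than (iii).

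Finally, (iv) and (v) are both obtained from Theorems~\ref{thm:Qn-upperbound2} and~\ref{thm:Qn-lowerbound2} with a vanishing fraction $d=o(n)$. Writing $2^n/k=2^{f(n)}$, the upper bound in (iv) uses the ``in particular'' form of Theorem~\ref{thm:Qn-upperbound2}, whose threshold reads $f(n)\lesssim d\log_2(n/d)$; choosing $d=\Theta(f(n)/\log(n/f(n)))$ makes $d\log_2(n/d)\sim f(n)$ and gives $\capt_k(Q_n)\le 2d+1=O(f(n)/\log(n/f(n)))$, while the lower bound in (iv) is the same estimate run through Theorem~\ref{thm:Qn-lowerbound2}; the two displayed expressions are identified by $\omega=n/f(n)$. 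For (v), taking $d$ to be a large enough constant (any $d$ exceeding the exponent hidden in $k\ge 2^n/n^{O(1)}$) makes the threshold $\Theta(2^n/n^{d+1})$ fall below $k$, so Theorem~\ref{thm:Qn-upperbound2} gives $\capt_k(Q_n)\le 2d+1=O(1)$ and no lower bound is needed. The main obstacle throughout is the bookkeeping that converts each theorem's hypothesis into a clean threshold on $\log_2 k/n$, and in particular the entropy identity pinning down $b$.
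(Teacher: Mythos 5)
Your proposal is correct and follows essentially the same route as the paper: each regime is matched to Theorems~\ref{thm:Qn-upperbound1}--\ref{thm:Qn-lowerbound2} together with monotonicity of $\capt_k$ and the known $\Theta(n\log n)$ capture time, and your entropy identity $2c-(1-c)H\bigl(\tfrac{c}{1-c}\bigr)=g(c)=-b$ is exactly the content of the paper's Stirling computation of $36\cdot 2^n n\,(2d+1)_{d+1}/(n-d)_{d+1}$ at $d=cn-2$, just phrased via binomial entropy instead of falling factorials. The only cosmetic deviations are that you apply Theorem~\ref{thm:Qn-lowerbound2} directly for the lower bound in (ii) (the paper gets it from (iii) by monotonicity) and that your $d=cn$ should be $d\le cn-2$ to meet the theorem's hypothesis, a trivial adjustment.
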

\begin{proof}
Recall that the capture time with $c(Q_n)$ cops was determined to be $\Theta(n\log n)$ in~\cite{BGKP}; since $\capt_k(G)$ is monotone non-increasing in $k$, this establishes the upper bounds in
parts~(i) and (ii).  The lower bound in part~(i) follows immediately from Theorem~\ref{thm:Qn-lowerbound1}.  The lower bound in part~(iii) (and hence, by monotonicity also in~(ii)) follows from
Theorem~\ref{thm:Qn-lowerbound2}, with $d=\alpha n$ chosen so that $\varepsilon>\log_2((e/\alpha)^{\alpha})+1/n$. For the upper bound in part (iii) note that if $d = cn-2$, then (using Stirling's
formula $x! \sim \sqrt{2\pi x} (x/e)^x$) we have
\begin{align*}
36 \cdot 2^n n \ \frac {(2d+1)_{d+1}}{ (n-d)_{d+1} }
&\sim 36n \cdot 2^{ n } \frac{ \sqrt{2d+1}((2d+1)/e)^{2d+1} } {\sqrt{d} (d/e)^d}
    \frac{\sqrt{n-2d}((n-2d)/e)^{n-2d}}{\sqrt{(n-d)}((n-d)/e)^{n-d}} \\
&= O(1) \cdot 2^n n \frac{ (2cn)^{2cn-1} ((1-2c)n)^{(1-2c)n} }{ (cn)^{cn} ((1-c)n)^{(1-c)n}}\\
&= O(1) \cdot 2^{n} \left(\frac{ (2c)^{2c} (1-2c)^{1-2c} }{c^{c} (1-c)^{1-c}}\right)^n
 = 2^{n(1+ g(c) + o(1))} < k,
\end{align*}
(when $n$ is large enough to make the $o(1)$ term less than $\varepsilon$) and Theorem~\ref{thm:Qn-upperbound2} yields the linear upper bound.
To get the upper bound in part~(iv) we will again use Theorem~\ref{thm:Qn-upperbound2}, this time with $d = 2 n / (\omega \log \omega)$. Since $1 \ll \omega = o(n / \log n)$, we have
\[
36 \cdot 2^n n \ \left( \frac {3d}{n(1-c)} \right)^{d+1} \le 2^{n + O(\log n) + \frac {2n}{\omega \log \omega} \log \left( \frac {8}{\omega \log \omega} \right) } =
2^{n - \frac{(2+o(1)) n \log\log\omega}{\omega} } < 2^{n - f(n)} = k,
\]
and the desired upper bound holds. To get the matching lower bound we will use Theorem~\ref{thm:Qn-lowerbound2} with $d = n / (2 \omega \log \omega)$. This time we need to verify that
\[
\frac 12 \cdot 2^n \left( \frac {d}{en} \right)^{d} = 2^{n-1 + \frac {n}{2 \omega \log \omega} \log \left( \frac {1}{2e \omega \log \omega} \right)} =  2^{n- (1 + o(1)) \frac {n}{2\omega} } > 2^{n - f(n)} = k,
\]
and the desired lower bound holds too.
Finally, part~(v) follows immediately from Theorem~\ref{thm:Qn-upperbound2} with $d$ constant.
\end{proof}

\section{Planar graphs}\label{sec:planar}

We first investigate temporal speed-up on planar graphs if $k = \Omega(\sqrt{n})$.

\begin{thm}\label{planarc}
For any connected planar graph $G$, if $k\geq 12\sqrt{n}$, then $\capt_k(G)\leq 6\rad(G)\log n$.
\end{thm}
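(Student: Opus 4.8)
The plan is a divide-and-conquer strategy driven by the planar separator theorem, in which the robber is confined to a region that shrinks by a constant factor through $O(\log n)$ phases, with cops parked along the separators used so far acting as impassable walls.

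The basic confinement fact is elementary: if at some round every vertex of a set $W$ carries a cop and continues to do so thereafter, and the robber is then on a vertex lying in a connected component $C$ of $G-W$, she can never leave $C$, since every edge out of $C$ leads into the occupied set $W$ and stepping along such an edge means capture. With this in hand I set up the recursion. Put $R_0=V(G)$. At phase $i$ the robber will be confined to a set $R_i$ with $|R_i|\le (2/3)^i n$, and all of $S_0\cup\dotsb\cup S_{i-1}$ will be occupied by parked cops. I apply the planar separator theorem to the (still planar) induced subgraph $G[R_i]$ to obtain a separator $S_i\subseteq R_i$ with $|S_i|=O(\sqrt{|R_i|})$ whose removal leaves components of size at most $\tfrac23|R_i|$, and I move $|S_i|$ free cops onto $S_i$. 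The key simplification is that I need not control the robber's trajectory during this deployment: once every vertex of $S_i$ is occupied, the robber sits in some component $R_{i+1}$ of $G[R_i]-S_i$, and every edge leaving $R_{i+1}$ enters $S_i$ or the earlier walls $S_0\cup\dotsb\cup S_{i-1}$, all occupied; so by the confinement fact (with $W=S_0\cup\dotsb\cup S_i$) she is trapped in $R_{i+1}$, and $|R_{i+1}|\le\tfrac23|R_i|$. After $L=\lceil\log_{3/2}n\rceil$ phases $R_L$ is a single vertex all of whose neighbours are occupied, and one further move captures her.

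There are two quantitative checks. For the number of cops, the parked teams total $\sum_{i\ge0}|S_i|\le c\sqrt n\sum_{i\ge0}(2/3)^{i/2}=\frac{c}{1-\sqrt{2/3}}\,\sqrt n$, a convergent geometric series, so the total stays below $12\sqrt n$ (with $c$ the leading constant of the separator theorem); consequently the free cops never run out and phase $i$ can always supply the needed $|S_i|$ of them. For the running time, moving a cop onto any target vertex costs at most $\diam(G)\le 2\rad(G)$ rounds irrespective of its current position, since cops may pass freely through occupied vertices; hence each phase costs at most $2\rad(G)$ rounds and the whole game lasts $O(\rad(G)\log n)$ rounds, which after tracking the constant and the base of the logarithm yields the stated bound $\capt_k(G)\le 6\rad(G)\log n$.

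I expect the main obstacle to be the cop-count constant. The naive Lipton--Tarjan bound $2\sqrt2\sqrt m$ makes the geometric series sum to roughly $15.4\sqrt n$, so to reach $12\sqrt n$ one must invoke a separator theorem with a smaller leading constant, or else reclaim the interior wall cops that the robber can no longer reach once she has descended into $R_{i+1}$. The only other delicate point is exactly the one the component argument resolves: that occupying a separator at the \emph{end} of a deployment confines the robber no matter how she moved meanwhile, so that the phase invariant $|R_{i+1}|\le\tfrac23|R_i|$ holds unconditionally.
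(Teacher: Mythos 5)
Your proposal is correct and follows essentially the same strategy as the paper: recursively occupying planar separators with fresh teams of cops, bounding the total number of cops by the geometric series $2.13\sqrt{n}\sum_{i\ge0}(2/3)^{i/2} = 2.13\sqrt{n}/\bigl(1-\sqrt{2/3}\bigr) < 12\sqrt{n}$, and confining the robber through $O(\log n)$ phases each costing $O(\rad(G))$ rounds. The obstacle you flag is resolved exactly as you suggest: the paper invokes the Alon--Seymour--Thomas separator theorem (separator of size at most $2.13\sqrt{n}$, parts of size at most $\tfrac23 n$), and it additionally parks the reserve cops at a central vertex so that each deployment costs only $\rad(G)$ rather than $\diam(G)\le 2\rad(G)$ rounds, which gives comfortable room in the constant $6$.
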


\begin{proof}
We use the planar separator theorem of Alon, Seymour, and Thomas~\cite{AST94}: there is a set of at most $2.13\sqrt{n}$ vertices that separate the graph into two sets of size at most $\frac23 n$. Now
\[ k\geq 12\sqrt{n} > 2.13\sqrt{n}+2.13\sqrt{\frac23n}+2.13\sqrt{\left(\frac23\right)^2n}+\dotsb,\]
so place an initial team of cops on a separator of size at most $2.13\sqrt{n}$ (so that each vertex of the separator is covered by exactly one cop) and the rest of the cops on a central vertex.  The robber will place herself on some vertex that is in one of the two subgraphs
separated by the first team of cops.  A second team of cops, of size $2.13\sqrt{\frac23n}$, moves to a separator of that subgraph, while all the other cops remain still.  Repeat this process until
the robber's territory is reduced to nothing.  This requires a number of teams $t$ that satisfies
\[ \left(\frac23\right)^t n < 1, \]
which is true with $t=6\log n$.  Each team of cops takes its position in at most $\rad(G)$ steps.
\end{proof}

Observe that the proof of Theorem~\ref{planarc} works even in a version of the game in which the robber is allowed to move \emph{infinitely fast}; that is, she can move to any vertex in the same
component of the graph minus the cops' vertices.

We can use the version for graphs of genus $g$ of Gilbert, Hutchinson, and Tarjan~\cite{GHT} to obtain the following.
\begin{cor}
Let $G$ be a connected graph with genus $g$, and suppose that $k\geq (19+66\sqrt{g})\sqrt{n}$.  Then $\capt_k(G)\leq 6\rad(G)\log n$.
\end{cor}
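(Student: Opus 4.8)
The plan is to repeat the proof of Theorem~\ref{planarc} essentially verbatim, substituting the genus-$g$ separator theorem of Gilbert, Hutchinson, and Tarjan~\cite{GHT} for the Alon--Seymour--Thomas theorem. That result guarantees, for any $n$-vertex graph of genus $g$, a separator of size at most $\sigma(g)\sqrt{n}$ (for the explicit constant $\sigma(g)$ supplied by~\cite{GHT}) whose removal leaves two parts each of at most $\frac{2}{3}n$ vertices. With this in hand the cop strategy is unchanged: a first team occupies a separator of $G$ while all remaining cops wait on a central vertex; the robber commits to one of the two sides; a second team marches out to a separator of that side; and so on, so that the robber's available territory shrinks by a factor $\frac{2}{3}$ at each stage. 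As in Theorem~\ref{planarc}, this argument is robust enough to handle even an infinitely fast robber.

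Two things must be verified. First, every subgraph arising in the recursion still has genus at most $g$, so that the bound $\sigma(g)\sqrt{n'}$ governs a subgraph of order $n'$ at every stage; this is immediate, since genus is monotone under taking subgraphs. Second, the total number of cops summed over all teams must not exceed $k$. Since the $t$-th territory has at most $(2/3)^t n$ vertices, its separator has at most $\sigma(g)(\sqrt{2/3})^t\sqrt{n}$ vertices, and summing the geometric series gives a total of at most
\[
\frac{\sigma(g)}{1-\sqrt{2/3}}\,\sqrt{n} \;\approx\; 5.45\,\sigma(g)\,\sqrt{n}.
\]
Inserting the leading constants of the~\cite{GHT} separator bound into $\sigma(g)$ and carrying out this multiplication produces a bound of the shape $(19+66\sqrt{g})\sqrt{n}$, which is exactly the hypothesis $k\geq(19+66\sqrt{g})\sqrt{n}$; hence there are enough cops to staff every team.

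Finally, just as in Theorem~\ref{planarc}, the robber's territory is emptied once $(2/3)^t n<1$, i.e.\ after $t=6\log n$ teams. The teams act one after another, and each reaches its assigned separator in at most $\rad(G)$ steps (the reserve cops sit on a central vertex, from which every target is within $\rad(G)$). Thus $\capt_k(G)\leq t\cdot\rad(G)=6\rad(G)\log n$. The only genuinely quantitative step is the second verification above: it requires the precise leading coefficients in the~\cite{GHT} bound together with the arithmetic $\frac{\sigma(g)}{1-\sqrt{2/3}}\leq 19+66\sqrt{g}$. I expect this constant-chasing, rather than any conceptual difficulty, to be the main---indeed the only---obstacle.
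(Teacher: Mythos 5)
Your proposal is exactly the paper's intended argument: the paper gives no separate proof for this corollary, stating only that one substitutes the Gilbert--Hutchinson--Tarjan genus-$g$ separator theorem for the Alon--Seymour--Thomas theorem in the proof of Theorem~\ref{planarc}, which is precisely what you do (including the observation that genus is subgraph-monotone, so the recursion is valid). The constant-chasing you defer does go through with room to spare: GHT gives a $\tfrac23$-separator of size at most $6\sqrt{gn}+2\sqrt{2n}+1$, and multiplying by $\bigl(1-\sqrt{2/3}\bigr)^{-1}\approx 5.45$ yields roughly $(33\sqrt{g}+16)\sqrt{n}$ plus lower-order terms, which is below the hypothesis $(19+66\sqrt{g})\sqrt{n}$.
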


For the square grid $P_n\cp P_n$ with $n^2$ vertices and $k=n$ cops, note that the strategy in the proof above takes $O(n\log n)$ time, whereas one can just sweep along the rows to capture in time
$n/2$, and our partitioning scheme for Theorem~\ref{thm:grids} gets it down to $O(\sqrt{n})$ time.

We finish this section by considering the effect of having three cops play on planar graphs.

\begin{thm}\label{thm:3copplanar}
If $G$ is a connected planar graph, then $\capt_3(G)\leq (\diam(G)+1) |V(G)|$.
\end{thm}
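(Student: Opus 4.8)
The plan is to run the Aigner--Fromme strategy witnessing that three cops suffice on a connected planar graph (see \cite{AF,BN}), and to bound the number of rounds it consumes. That strategy proceeds in \emph{phases}: at every moment the robber is confined to a connected region $R$ of $G$ whose boundary is guarded by at most two of the cops, and in each phase the remaining (free) cop moves to guard a new shortest path that splits $R$, after which the robber is trapped on one side of the split and one of the two boundary cops is released to become the free cop of the next phase. I would bound the two relevant quantities separately --- the number of phases and the number of rounds per phase --- and multiply them to obtain $(\diam(G)+1)\,|V(G)|$.

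For the per-phase cost I would invoke the standard path-guarding lemma. A single cop can guard a geodesic $P=p_0p_1\dotsb p_\ell$ in the sense that, after a setup period, he occupies the vertex $p_j$ of $P$ currently closest to the robber and thereafter keeps shadowing that nearest vertex, so that the robber can never step onto $P$ without being captured. Maintenance costs exactly one move per round, because when the robber moves one step every distance $\dist(p_i,\cdot)$ changes by at most one, so the nearest index $j$ shifts by at most one and the cop can follow it along $P$. The setup is where the factor $\diam(G)+1$ enters: running the shadow strategy, the free cop moves toward the vertex of $P$ nearest the robber, and the natural potential (his distance to that moving target) starts at most $\diam(G)$, never increases, and strictly decreases except in rounds where the robber pushes its projection further along $P$ away from the cop; since $P$ has length at most $\diam(G)$ the projection can be pushed away only boundedly often before being pinned at the far endpoint. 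Hence guarding is established within at most $\diam(G)+1$ rounds.

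For the number of phases, observe that the new geodesic guarded in a phase lies inside the current region $R$ and separates it, so afterwards the robber is confined to a region with strictly fewer vertices than $R$; in particular each phase removes at least one vertex from the robber's territory. Since the territory starts with at most $|V(G)|$ vertices and the game ends once it has shrunk to a single vertex (the capture), there are at most $|V(G)|$ phases. Combining the two bounds gives $\capt_3(G)\le(\diam(G)+1)\,|V(G)|$. (If $c(G)<3$ the same three-cop strategy applies, so the bound holds in all cases.)

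The main obstacle is the per-phase timing, namely verifying that a cop can \emph{(re)}establish guarding of a freshly chosen geodesic within $\diam(G)+1$ rounds even when he begins off that path. This is precisely the shadow/projection analysis sketched above: one must track the gap between the cop and the robber's projected position, check that it cannot grow, and bound the number of rounds in which the robber is able to keep it from shrinking by using that the projection lives on a path of length at most $\diam(G)$. By contrast, the phase structure and the guarantee that each phase strictly shrinks the robber's territory are exactly the content of the Aigner--Fromme argument and can be quoted from \cite{AF,BN}; the only genuinely new work is this timing estimate.
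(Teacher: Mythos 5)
Your phase structure and the bound of at most $|V(G)|$ phases are sound, but the per-phase timing estimate --- that a cop can establish guarding of the relevant geodesic within $\diam(G)+1$ rounds --- is exactly where the argument breaks, for two reasons. First, the paths guarded in later phases of the Aigner--Fromme strategy are shortest paths \emph{in the robber's current territory} (a subgraph of $G$), not shortest paths in $G$: they must lie inside that territory, and a geodesic of a subgraph can have length close to $|V(G)|$, far exceeding $\diam(G)$. So your claim that ``the projection can be pushed away only boundedly often'' with bound $\diam(G)$ fails; it can be pushed away on the order of $|V(P)|$ times. Second, even when $P$ is short, the potential argument is incomplete: a move of the projection that increases its $G$-distance from the cop need not move it toward an endpoint of $P$ unless the cop is already standing \emph{on} $P$; while the cop is still off the path, the projection can oscillate back and forth with every move counting as ``away,'' so nothing pins it at the far endpoint.

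The paper's proof repairs precisely this point with a different accounting. The cop first spends at most $\diam(G)$ steps walking to the \emph{central} vertex of $P$ (ignoring the robber), and only then plays catch-up along $P$, where ``away from the cop'' genuinely means ``toward the far endpoint''; this takes at most $|V(P)|/2$ further steps, so a phase costs $\diam(G)+|V(P)|/2$ rather than $\diam(G)+1$. The extra cost is then amortized against progress: a phase that guards $P$ shrinks the robber's territory by at least roughly $|V(P)|$ vertices, i.e., phase $i$ costs at most $\diam(G)+k_i$ steps and removes at least $k_i\geq 1$ vertices. Since $\sum_i k_i\leq |V(G)|$ and there are at most $|V(G)|$ phases, the total is at most $|V(G)|\diam(G)+|V(G)|=(\diam(G)+1)|V(G)|$. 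Your scheme of multiplying (number of phases) by (worst-case per-phase cost) cannot yield this bound once the per-phase cost is corrected; the amortization --- charging the $|V(P)|/2$ catch-up steps to the $|V(P)|$ vertices removed --- is essential.
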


The bound in Theorem~\ref{thm:3copplanar} is an improvement over the bound (for any $G$) that for $k\geq c(G)$, $\capt_k(G)\leq n^{c(G)+1}$~\cite{BI}.  It does not, however, improve the bound for
cop-win graphs of $n-3$~\cite{BGHK,gav1}.

\begin{proof}[Proof of Theorem~\ref{thm:3copplanar}]
We follow the proof that 3 cops suffice to catch the robber presented in~\cite{BN} (based on the original proof of~\cite{AF}), with some slight modifications, and give an upper bound on the time the
cop's algorithm given there may take.
 The main observation is that a cop can guard any shortest path $P$ in a subgraph $H$ of $G$ in at most $\diam(G)+|V(P)|/2$ steps: at most $\diam(G)$ steps to reach the central vertex of $P$,
followed by at most $|V(P)|/2$ steps to capture the shadow of the robber on $P$.

The cops maintain a \emph{cop territory} into which the robber cannot enter without being immediately caught; the remainder of the graph is the \emph{unguarded territory}, denoted $H$.
The cops ensure one of the following three cases hold throughout the game (after an initial phase):
\begin{enumerate}[(I)]
\item\label{case:onepath} Some cop is guarding a (nontrivial) shortest path $P$ of $H$, and any path from the robber to the cop territory is through $P$.
\item\label{case:twopaths} Two cops guard $P_1\cup P_2$, where $P_1$ and $P_2$ are internally disjoint paths with the same endpoints, and any path from the robber to the cop territory is through a
    vertex of $P_1\cup P_2$.  ($H$ is either the interior or exterior region of the cycle $P_1\cup P_2$, whichever contains the robber.)
\item\label{case:cutvertex} Some cop guards a single vertex that prevents the robber from leaving $H$.
\end{enumerate}
To begin, we send one cop to guard a shortest path $P$ joining two vertices at maximum distance from each other.  This takes at most $|P|/2$($=\diam(G)/2$) steps and puts us into
Case~\ref{case:onepath} (with $H=G-V(P)$).  The cops' strategy now repeatedly reduces the unguarded territory $H$ while ensuring one of the three cases holds at all times.  We claim that in the $i$th
phase, we take at most $\diam(G)+k_i$ steps and reduce the unguarded territory by at least $k_i$ vertices for some $k_i$.  This will imply that the robber is caught when $\sum_i k_i = n$, which will
take at most $\sum_i (\diam(G)+k_i) \leq n\diam(G)+n$ steps.

\smallskip
\noindent\textbf{Case~\ref{case:onepath}:}  Let $Y$ denote the component of $H-V(P)$ containing the robber, and let $C_1$ be the cop guarding $P$.

If there is a unique $v\in V(P)$ with a neighbour in $Y$, then $C_1$ prevents the robber from reaching $v$, so we are actually in Case~\ref{case:cutvertex}.  This is not counted as a phase, as it
does not require additional time.

Otherwise, let $v_1$ and $v_2$ be the first and last vertices of $P$ that have neighbours in $Y$, and let $u_1$ and $u_2$ be such neighbours (respectively).  Let $P_2$ be a shortest $(u_1,u_2)$-path
in $Y$, and move $C_2$ to guard $P_2$.  This takes at most $\diam(G)+|V(P_2)|/2$ steps. Let $P_1$ be the portion of $P$ from $v_1$ to $v_2$. The robber's territory is now $Y$ restricted to either the
inside or outside of the cycle $P_1\cup P_2\cup\{v_1u_1, v_2u_2\}$.  Thus we are in Case~\ref{case:twopaths} and we have reduced $H$ by at least $|V(P_2)|$.

\smallskip\noindent\textbf{Case~\ref{case:twopaths}:}  Let $C_1$ and $C_2$ be the cops guarding $P_1$ and $P_2$, respectively.  Let $X=P_1\cup P_2$, and let $Y$ be the component of $H$ containing the robber.

If there is a unique $v\in V(X)$ with a neighbour in $Y$, then one of $C_1$ and $C_2$ prevents the robber from escaping $Y$ through $v$, so we are actually in Case~\ref{case:cutvertex}.  Again we do
not count this as a phase.

If each of $P_1$, $P_2$ have exactly one vertex (say $v_1$, $v_2$ respectively) with a neighbour in $Y$, then let $K=G[V(Y)\cup\{v_1,v_2\}]-\{v_1v_2\}$.  Let $P$ be a shortest $(v_1,v_2)$-path in
$K$; note that $P$ contains at least one vertex of $Y$.  Send $C_3$ to guard $P$.  The robber cannot reach $v_1$ or $v_2$ without being caught by $C_1$ or $C_2$, so this takes at most
$\diam(G)+(|V(P)|-2)/2$ steps.  Once $C_3$ is in place, note that the robber cannot safely reach $P_1$ or $P_2$, so $C_1$ and $C_2$ are free again to move.  We are now in Case~\ref{case:onepath} and
have reduced $H$ by at least $|V(P)|-2$ vertices.

Finally, suppose $P_1$ has at least two vertices with neighbours in $Y$; let $v_1,v_2$ be the first and last such vertices of $P_1$.  Let $u_1,u_2$ be neighbours of $v_1,v_2$ (respectively) in $Y$.
Let $P$ be a shortest $(u_1,u_2)$-path in $Y$, and send $C_3$ to guard $P$.  Let $Q$ denote the subpath of $P_1$ from $v_1$ to $v_2$.  If the robber is in the region bounded by $Q\cup
P\cup\{v_1u_1,v_2u_2\}$, then we are in Case~\ref{case:twopaths} with $C_2$ free to move.  If instead the robber is in the region bounded by $(P_1-Q)\cup P_2\cup P\cup\{v_1u_1,v_2u_2\}$, then let
$P'=(P_1-Q)\cup P\cup\{v_1u_1,v_2u_2\}$; note that $P'$ is a shortest $(v_1,v_2)$-path in the region bounded by $P'\cup P_2$, so $C_3$ may actually guard all of $P'$.  Hence we are in
Case~\ref{case:twopaths} with $C_1$ free to move. In either case we have taken at most $\diam(G)+|V(P)|/2$ steps and have reduced $H$ by $|V(P)|$ vertices.

\smallskip\noindent\textbf{Case~\ref{case:cutvertex}:} Let $Y'$ be the component of $H$ containing the robber, and let $Y=G[Y'\cup\{v\}]$.  Let $u$ be a vertex of maximum distance (in $Y$) from $v$, and let $P$ be a shortest $(u,v)$-path in $Y$.  Move a free cop to guard $P-v$; once he is in place, he guards all of $P$.  We are in Case~\ref{case:onepath}, have taken at most $\diam(G)+(|V(P)|-1)/2$ turns, and have reduced $H$ by $|V(P)|-1$ vertices.
\end{proof}

\section{Binomial Random Graphs}

The binomial random graph $\mathcal{G}(n,p)$ is defined as a random graph with vertex set $[n]=\{1,2,\dots, n\}$ in which a pair of vertices appears as an edge with probability $p$, independently for
each such a pair. As typical in random graph theory, we consider only asymptotic properties of $\mathcal{G}(n,p)$ as $n\rightarrow \infty$, where $p=p(n)$ may and usually does depend on $n$. We say
that an event in a probability space holds \emph{asymptotically almost surely} (\emph{a.a.s}.) if its probability tends to one as $n$ goes to infinity.

We first briefly describe some known results on the cop number of $\mathcal{G}(n,p)$. The first and third author along with Wang investigated such games in $\mathcal{G}(n,p)$ random graphs, and their
generalizations used to model complex networks with a power-law degree distribution (see~\cite{bpw}). From their results it follows that if $2 \log n / \sqrt{n} \le p < 1-\epsilon$ for some
$\epsilon>0$, then a.a.s.\ $ c(\mathcal{G}(n,p))= \Theta(\log n/p). $ Bollob\'as, Kun and Leader~\cite{bkl} showed that for $p(n) \ge 2.1 \log n /n$, then a.a.s.
$$
\frac{1}{(pn)^2}n^{ 1/2 - 9/(2\log\log (pn))  }  \le c(\mathcal{G}(n,p))\le 160000\sqrt n \log n\,.
$$
From these results, if $np \ge 2.1 \log n$ and either $np=n^{o(1)}$ or $np=n^{1/2+o(1)}$, then a.a.s.\ $c(\mathcal{G}(n,p))= n^{1/2+o(1)}$. Somewhat surprisingly, between these values
$c(\mathcal{G}(n,p))$ was shown by \L{}uczak and the third author~\cite{Expansion3} to have more complicated behaviour. It follows that a.a.s.\ $\log_n  c(\mathcal{G}(n,n^{x-1}))$ is asymptotic to
the function $f(x)$ shown in Figure~\ref{fig1}.
\begin{figure}
\begin{center}
\includegraphics[width=4in]{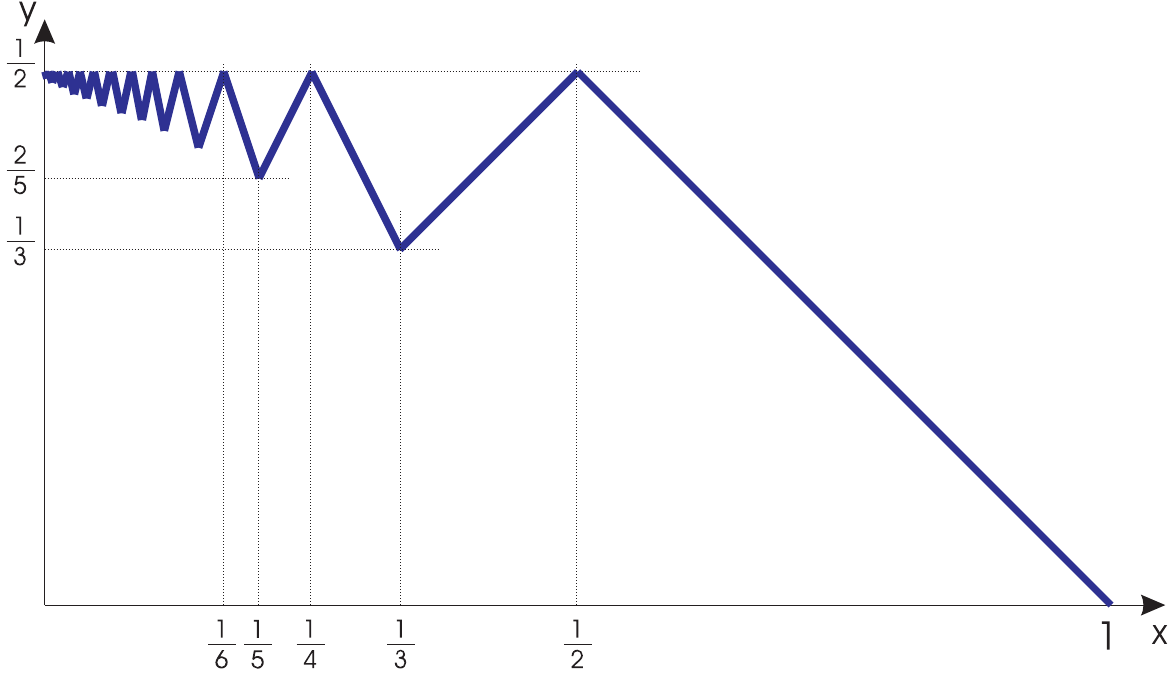}
\end{center}
\caption{The ``zigzag'' function $f$.}\label{fig1}
\end{figure}

Using ideas from~\cite{Expansion3,Expansion2}, we may obtain bounds for the capture time of binomial random graphs. For simplicity, we restrict ourselves to dense random graphs ($d=p(n-1)\ge \log^3
n$) and a large number of cops ($k = k(n) \ge C \sqrt{n \log n}$). Further, we present a sketch of the proof only. Adjusting the argument to sparser graphs (based on the more sophisticated argument
in~\cite{Expansion2} for the sparse case) should be straightforward, but the upper bound will not match the lower bound. Similarly, adjusting the argument to fewer cops is possible but definitely not
all the way to the cop number. Investigating the capture time for $k=c(\mathcal{G}(n,p))$, even for very dense random graphs (say, for $p=1/2$) appears to be a challenging problem. We do not even
know the exact value of the cop number there! For further background, the reader is directed to~\cite{Expansion3, Expansion1,Expansion2}.

\begin{thm}\label{thm:dense_case}
Suppose that $d=p(n-1)\ge \log^3 n$ and $C \sqrt{n \log n} \le k = k(n) < n$ for some sufficiently large constant $C$.
Finally, let $r=r(d,k)$ be the smallest positive integer such that $d^{r+1} \ge C n \log n / k$. Let $G=(V,E) \in \mathcal{G}(n,p)$. Then a.a.s.\
$$
\capt_k(G) = \Theta(r).
$$
\end{thm}

\begin{proof}
As referenced above, we sketch the proof only. First, let us mention that a.a.s.\ $\mathcal{G}(n,p)$ is a good expander. Let $N(v,j)$ be the set of vertices at distance at most $j$ from vertex $v$.
One can show that a.a.s.\ for any vertex $v$ and every $j$ such that $d^j = o(n)$, $N(v,j) = (1+o(1)) d^j$. Moreover, it is well known that any graph $G$ with minimum degree $\delta=\delta(G)>2$ has
a dominating set of size $O(n \log \delta / \delta)$. Hence, we may assume that $d < \sqrt{n \log n}$ as for denser graphs we immediately get that a.a.s.\ $\capt_k(G) = \Theta(1)$ for any $C \sqrt{n
\log n} \le k < n$. Finally, we may assume that $d < C n \log n / k$. Indeed, if $k$ is too large so that $d \ge C n \log n / k$, then the result for $k' = C \sqrt{n \log n}$ implies that a.a.s.\
$\capt_k(G) \le \capt_{k'}(G)= \Theta(1)$.

We place $k$ cops at random as in the proof of Theorem~\ref{thm:Qn-upperbound2}. The robber appears at some vertex $v \in V$. Note that it follows from the definition of $d$ that
$$
d^r< \frac {Cn\log n}{k} \le \sqrt{n \log n} \le \frac {k}{C}.
$$
The main difficulty is to show that with probability $1-o(n^{-1})$, it is possible to assign distinct cops to all vertices $u$ in $N(v,r) \setminus N(v,r-1)$ such that a cop assigned to $u$ is within
distance $(r+1)$ of $u$. (Note that here, the probability refers to the randomness in distributing the cops; the random graph is fixed.) If this can be done, then after the robber appears these cops
can begin moving straight to their assigned destinations in $N(v,r) \setminus N(v,r-1)$. Since the first move belongs to the cops, they have $r+1$ steps to do so, after which the robber must still be
inside $N(v,r)$, while $N(v,r)\setminus N(v,r-1)$ is fully occupied by cops. Then in at most $r$ additional steps, the cops can ``tighten the net'' around $v$ and capture the robber. Hence, the cops
will win after at most $2r+1$ steps with probability $1-o(n^{-1})$, for each possible starting vertex $v \in V$. Hence, this strategy gives a win for the cops a.a.s.

We will use Hall's theorem for bipartite graphs to show that the desired assignment exists. We need to verify that, for any set $S\subseteq N(v,r) \setminus N(v,r-1)$, there are at least $|S|$ cops
lying on vertices within distance $r+1$ from some vertex in $S$. One thing to make sure of is that Hall's condition holds for $S=N(v,r) \setminus N(v,r-1)$. It follows from expansion properties that
$|N(v,r)| < 2 d^r \le 2k/C$, so there are enough cops to achieve this goal. The main bottleneck is to satisfy the condition for sets with $|S|=1$. Since for any vertex $u$, the expected number of
cops in $N(u,r+1)$ is asymptotic to $k d^{r+1} / n \ge C \log n$, the condition holds provided that $C$ is large enough (see~\cite{Expansion3} or~\cite{Expansion2} for more details).

In order to get the lower bound, we need to use expansion properties again. It is possible to show that a.a.s.\ for any starting position of $k$ cops, the number of vertices at distance at most $r-1$
from them is asymptotic to $k d^{r-1}$, since $k d^{r-1} < C n \log n / d = o(n)$. The robber can start at distance at least $r$ from any cop and wait there.
\end{proof}

\end{document}